\theoremstyle{plain} 
\newtheorem{Theorem}{Theorem}[section]
\newtheorem{Corollary}[Theorem]{Corollary} 
\newtheorem{Lemma}[Theorem]{Lemma}
\theoremstyle{definition} 
\newtheorem*{Definition}{Definition} 
\newtheorem*{Remark}{Remark} 
\newtheorem*{Acknowledgements}{Acknowledgements}
\newtheorem*{Strong Atiyah conjecture}{Strong Atiyah conjecture}
\newtheorem*{Strong Atiyah conjecture(Algebraic Version)}{Strong Atiyah conjecture (Algebraic Version)}
\newcommand{\gG}{\Gamma}
\newcommand\BZ{\mathbb{Z}} 
\newcommand*{\xMin}{0}%
\newcommand*{\xMax}{3}%
\newcommand*{\yMin}{0}%
\newcommand*{\yMax}{-3}%
\newcommand*{\zMin}{6}%
\newcommand*{\zMax}{10}%
\newcommand*{\sMin}{2}%
\newcommand*{\sMax}{7}%
\newcommand*{\tMin}{-7}%
\newcommand*{\tMax}{-10}%
\DeclareMathOperator{\Lk}{Lk}
\DeclareMathOperator{\Sym}{Sym}
\DeclareMathOperator{\tr}{tr}
\DeclareMathOperator{\lcm}{lcm} 
\begin{document}

\title{The Strong Atiyah conjecture for virtually cocompact special groups}

\author{Kevin Schreve}
\address{ Dept. of Mathematical Sciences\\
  University of Wisconsin\\
  Milwaukee, WI  USA 53201}
\email{kschreve@uwm.edu}

\date{}

\subjclass[2010]{ 20F65, 20F36, 20F67, 20J06, 16S34, 55N25}

\keywords{Strong Atiyah Conjecture, $\ell^2$-homology,  $\ell^2$-betti numbers, special groups, good groups.}

\begin{abstract}
We provide new conditions for the Strong Atiyah conjecture to lift to finite group extensions. In particular, we show these conditions hold for cocompact special groups so the Strong Atiyah conjecture holds for virtually cocompact special groups.
\end{abstract}
\maketitle

\section{Introduction}
\label{intro}

The motivation for this work comes from the following question of Atiyah \cite[p.~72]{a76} about $L^2$-Betti numbers of a manifold $Y$ with a cocompact proper $G$-action (we slightly change the notation):
\begin{quotation} A priori the numbers $\ell^{2}b^{k}(Y;G)$ are real. Give examples
where they are not integral and even perhaps irrational.
\end{quotation}
Of course, non-integral examples are well-known and can be easily constructed for any group with torsion.  
Irrational examples were first constructed by T. Austin~\cite{a09}. Additional examples appear in \cites{g10,psz10}. However, all these examples involve groups with unbounded torsion.  In fact,   for the class of groups with bounded torsion the following conjecture is still open.
\begin{Strong Atiyah conjecture}
	For any proper cocompact $G$-CW complex $Y$ 
	\[
		\lcm G \cdot \ell^{2}b^{k}(Y;G) \in \BZ,
	\]
	where $\lcm G$ denotes the least common multiple of the orders of finite subgroups of $G$.
\end{Strong Atiyah conjecture}
	(If $G$ contains arbitrarily large finite subgroups, the conjecture is vacuous.)
	
	Since  $\ell^{2}b^{k}(Y;G)$ is the von Neumann dimension of the kernel of the Laplacian operator $\Delta \in M_k(\BZ G)$, the above conjecture is implied by, and in fact equivalent to, the following:
	
	\begin{Strong Atiyah conjecture(Algebraic Version)}
	Let $A \in M_n(\BZ G)$. Then 
	\[
		\lcm G \cdot \dim_G(\ker A) :=   \lcm G \cdot \tr_G(\pi_{\ker A}) := \lcm G \cdot \sum_{i = 1}^n \langle \pi_{\ker A} \delta_1e_i, \delta_1e_i \rangle \in \BZ.
	\]
\end{Strong Atiyah conjecture(Algebraic Version)}
Here, $\pi_{\ker A}$ is the orthogonal projection from $\ell_2(G)^n$ onto the kernel, and $\delta_1e_i := (0, \dots, 0, \delta_1, 0, \dots, 0) \in \ell_2(G)^n$ is the standard basis element with non-zero entry the characteristic function of the identity element of G. This conjecture has also been generalized to matrices $A \in M_n(KG)$ for $K$ a subfield of $\mathbb{C}$.

The conjecture is known for a large class of groups. It easily follows from the multiplicative properties of $\ell^{2}b^{k}$ that the conjecture holds for all finite groups. It has also been shown for free groups and elementary amenable groups \cite{l93}, residually torsion-free elementary amenable groups \cite{s00}, and  right-angled Artin and Coxeter groups  \cite{los12}. Unfortunately, the conjecture does not behave well when passing to subgroups or taking finite extensions. A special case is when $G$ is torsion-free, in which case the Strong Atiyah conjecture for $G$ implies the conjecture for all subgroups. In the case of group extensions, a fundamental theorem was proven by Linnell and Schick in \cite{ls07}, where they gave conditions for the conjecture to lift to all elementary amenable extensions. Their main conditions, stated loosely here, are the following:

Let $\hat G^p$ denote the pro-$p$ completion of a discrete group $G$ for a prime $p$.   $G$ is called \textit{cohomologically $p$-complete} if the canonical homomorphism $G \rightarrow \hat G^p$ induces an isomorphism on cohomology with $\BZ/p\BZ$ coefficients. $G$ is \textit{cohomologically complete} if it is cohomologically $p$-complete for all $p$.

 $G$ has \textit{enough torsion-free quotients} if every map from $G$ to a finite $p$-group  factors through a torsion-free elementary amenable group.

In this paper, we modify enough torsion-free quotients to the \textit{factorization property}, which requires that any map from $G$ to a \emph{finite} group factors through a torsion-free elementary amenable group. This strengthening lets us relax the cohomological condition on $G$ to being a good group in the sense of Serre, which requires $G$ to have the same cohomology as its \emph{profinite} completion. Given these conditions on a group, we conclude the same lifting of the conjecture as Linnell and Schick. 

\begin{Theorem}\label{Atiyah1}	
	Suppose $1\to H\to G\to Q\to 1$ is an exact sequence of groups where $H$ satisfies the Strong Atiyah conjecture and $Q$ is elementary amenable. Suppose $H$ has a finite classifying space, is a good group, and  has the factorization property.
Then $G$ satisfies the Strong Atiyah conjecture. 
\end{Theorem}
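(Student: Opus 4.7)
The plan is to adapt the strategy of Linnell and Schick, trading their pro-$p$ completion and cohomological $p$-completeness hypotheses for the profinite completion together with Serre goodness, supported by the stronger factorization property. The overall architecture is: given the extension $1 \to H \to G \to Q \to 1$, we want to approximate $G$ by groups for which the Strong Atiyah conjecture is already known, namely elementary amenable extensions of torsion-free elementary amenable groups, and then pass to the limit using Lück approximation to control $\dim_G(\ker A)$ for $A \in M_n(\BZ G)$.

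First I would use the factorization property of $H$ to produce, for each finite quotient $H \twoheadrightarrow F$, a torsion-free elementary amenable group $T_F$ and a factorization $H \to T_F \twoheadrightarrow F$. By intersecting kernels and passing to an inverse system of finite-index normal subgroups of $H$ whose intersection is trivial, one obtains a compatible tower $\{T_{F_i}\}$ refining the profinite tower of $H$. Next, using the extension data for $G$ and the $Q$-equivariance (which requires the finite quotients to be $G$-invariant, hence characteristic subgroups of $H$), one pushes these factorizations out to a tower of groups $G_i$ fitting into $G \to G_i$ such that $G_i$ is an elementary amenable extension of the torsion-free elementary amenable group $T_{F_i}$. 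For such $G_i$, the Strong Atiyah conjecture is already known by the results quoted in the introduction (this is the case $H$ torsion-free elementary amenable of the Linnell--Schick theorem), so $\lcm(G_i)\cdot \dim_{G_i}(\ker \bar A_i)\in\BZ$ for the reduction $\bar A_i$ of $A$.

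Then I would apply Lück approximation along the chain of kernels in $H$: since $H$ has a finite classifying space, the relevant approximation theorems apply, and $\dim_G(\ker A) = \lim_i \dim_{G_i}(\ker \bar A_i)$ in a suitable sense. Combined with the uniform bound above — $\lcm(G_i)$ divides $\lcm(G)$ once the tower is arranged so that all torsion in $G_i$ comes from finite subgroups of $G$ — the limit lies in $\tfrac{1}{\lcm(G)}\BZ$, which is exactly the Strong Atiyah conjecture for $G$. The role of goodness of $H$ enters here: it ensures that the cohomology (with finite coefficients) of $H$ agrees with that of its profinite completion, so the inverse system of finite quotients is rich enough to detect the cohomological classes used to construct the pushout extensions and to certify that the torsion subgroups of $G_i$ are accounted for by finite subgroups of $G$.

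The main obstacle is precisely this cohomological comparison. In the Linnell--Schick setup, cohomological $p$-completeness suffices because one can isolate a single prime $p$, work with $p$-group quotients, and bound the $p$-part of the denominator of $\dim_G(\ker A)$ one prime at a time; the factorization need only handle $p$-group quotients. Here we sacrifice the ability to work prime-by-prime: goodness only gives agreement with profinite cohomology, not with each pro-$p$ cohomology separately, so the bookkeeping of extension classes must be done simultaneously across all primes. This is exactly why the factorization property is strengthened to all finite groups rather than just finite $p$-groups — so that the profinite tower admits torsion-free elementary amenable factorizations at every finite level simultaneously — and verifying that these strengthened ingredients actually combine to give the pushout construction and the required identification of torsion is where the substantive work of the proof lies.
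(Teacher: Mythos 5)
Your outline shares the Linnell--Schick skeleton (use the factorization property to get subgroups $U\trianglelefteq H$ with $H/U$ torsion-free elementary amenable, pass to the normal closures $U^G$ and the elementary amenable quotients $G/U^G$), but the two places where you depart from it are exactly where the proposal breaks down. First, the endgame: you want Lück-type approximation along a tower of quotients $G\to G_i$ with $\dim_G(\ker A)=\lim_i\dim_{G_i}(\ker \bar A_i)$. That requires the kernels to intersect trivially, i.e.\ (after intersecting with finite-index subgroups) residual finiteness of $H$, which is not among the hypotheses, and it requires an approximation theorem over infinite elementary amenable quotients --- neither is addressed. The paper needs neither: it only has to produce a \emph{single} quotient with $\lcm(G/U^G)=\lcm(G)$ and then quotes Linnell--Schick's extension theorem (Lemma \ref{tfea}, i.e.\ \cite[Theorem 2.6]{ls07}) for the extension $1\to U^G\to G\to G/U^G\to 1$, with $U^G$ torsion-free satisfying Atiyah as a subgroup of $H$. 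No limit is taken.

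Second, and more seriously, the crux of the whole theorem is precisely the step you dispose of with ``once the tower is arranged so that all torsion in $G_i$ comes from finite subgroups of $G$'' and ``goodness \dots certif[ies] that the torsion subgroups of $G_i$ are accounted for by finite subgroups of $G$.'' There is no mechanism given for this, and the role you assign to goodness (detecting cohomology classes to build pushout extensions) is not how it can be used. The actual argument is: reduce to $Q$ a finite $p$-group (via \cite[Lemma 2.4, Corollary 2.7]{ls07}, a reduction your proposal also omits); if \emph{every} quotient $G/U^G$ contains a subgroup of order $p^k$, then by cofinality of the $\mathcal{U}^G$ (from the factorization property) and compactness of $\hat G$, some $Q_0\le Q$ of order $p^k$ splits back to $\hat G_0$ with $G_0=\pi^{-1}(Q_0)$ (Lemmas \ref{split1}, \ref{split2}); then Theorem \ref{annoying} descends this splitting from the profinite completion to $G_0$ itself, forcing $\lcm(G)\ge p^k$ and hence the existence of a quotient with $\lcm(G/U^G)=\lcm(G)$. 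That descent is the technical heart of the paper: it compares the Atiyah--Hirzebruch spectral sequences for the reduced stable cohomotopy of $G_0$ and of $\hat G_0$, where goodness gives the isomorphism of $E_2$-terms (with finite coefficients) off the top row and the finite classifying space gives convergence, and then invokes Adem's criterion (\cite[Theorem 4.28]{ls07}) that injectivity of $\tilde\pi_S^0(Q_0)\to\tilde\pi_S^0(G_0)$ detects a splitting. None of this stable-cohomotopy machinery appears in your proposal, so the essential step --- converting torsion in all the quotients into torsion in $G$ --- is unproved.
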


A \textit{cocompact special group} is the fundamental group of a compact special cube complex as defined by Haglund and Wise in \cite{hw}.
Our motivation here is that while cocompact special groups have enough torsion-free quotients, there are known examples of cocompact special groups that are not cohomologically complete. However, Lorensen proved in \cite{MR2561762} that right-angled Artin groups are good, so it easily follows from the work of Haglund and Wise that compact special groups are good. Proving cocompact special groups have the factorization property gives us:

\begin{Theorem}\label{vcs1}
The Strong Atiyah conjecture holds for virtually cocompact special groups.
\end{Theorem}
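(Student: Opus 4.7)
The plan is to derive Theorem \ref{vcs1} as a direct application of Theorem \ref{Atiyah1}. Let $G$ be virtually cocompact special. Since finite-index subgroups of cocompact special groups remain cocompact special (they act freely and cocompactly on the same CAT(0) cube complex, with the same hyperplane structure), we may pass to the normal core of a finite-index cocompact special subgroup and obtain a finite-index normal subgroup $H \trianglelefteq G$ that is cocompact special. Then the short exact sequence $1 \to H \to G \to G/H \to 1$ has finite (hence elementary amenable) quotient, and it suffices to verify the four hypotheses on $H$ that appear in Theorem \ref{Atiyah1}: the Strong Atiyah conjecture, a finite classifying space, goodness, and the factorization property.

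Three of these four hypotheses I would dispatch from the literature. Since $H$ acts freely and cocompactly on a CAT(0) cube complex, which is contractible, the quotient is a finite $K(H,1)$. By Haglund and Wise \cite{hw}, $H$ embeds as a virtual retract of a right-angled Artin group $A$. Because RAAGs are good by Lorensen \cite{MR2561762}, and goodness is preserved under retracts and under passage to and from finite-index subgroups, it follows that $H$ is good. The Strong Atiyah conjecture for $H$ likewise follows from this embedding: $A$ satisfies the Strong Atiyah conjecture by \cite{los12}, and since $A$ is torsion-free, every subgroup inherits it; in particular $H$ does.

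The remaining hypothesis---the factorization property---is the main obstacle and the principal new input the paper must supply. Recall that cocompact special groups are already known to have enough torsion-free quotients, which handles maps to finite $p$-groups; the factorization property requires the analogous statement for maps to \emph{arbitrary} finite groups. I would approach this by again invoking the Haglund--Wise embedding, combined with the fact that right-angled Artin groups are residually torsion-free nilpotent (Duchamp--Krob), so their quotients by the terms of the lower central series are torsion-free nilpotent, and in particular torsion-free elementary amenable. Given a homomorphism from $H$ to a finite group $F$, the strategy is to use the virtual retraction to compare with a homomorphism out of the ambient RAAG, then exploit separation of the finitely many non-trivial cosets of the kernel in a sufficiently deep torsion-free nilpotent quotient to factor the map. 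The delicate point, which I expect to be the hardest, is ensuring that the factorization produced from the ambient RAAG descends to $H$ with a torsion-free elementary amenable intermediate target---the virtual-retract structure of the Haglund--Wise embedding should be exactly what permits this descent. Once the factorization property is established, Theorem \ref{Atiyah1} immediately concludes the proof.
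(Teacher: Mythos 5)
Your reduction is exactly the paper's: pass to a normal finite-index cocompact special subgroup $H$ (finite-index subgroups of cocompact special groups are cocompact special), and feed the extension $1\to H\to G\to G/H\to 1$ into Theorem \ref{Atiyah1}, verifying the hypotheses on $H$ via the Haglund--Wise virtual retraction into a right-angled Artin group (Theorem \ref{retract}); your treatment of the finite classifying space, of goodness, and of the Strong Atiyah conjecture for $H$ is correct. The genuine gap is in the one hypothesis you single out as the main obstacle, the factorization property, where the mechanism you propose cannot work. To factor $f\colon H\to F$ through a quotient $H/U$ you need kernel containment, $U\subseteq\ker f$; separating the finitely many nontrivial cosets of $\ker f$ in a deep torsion-free nilpotent quotient is a residual statement about elements and gives no such containment. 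Worse, the nilpotent quotients coming from residual torsion-free nilpotence (Duchamp--Krob) are structurally incapable of producing the factorization: if $f$ is a surjection of the free group $F_2$ (the simplest RAAG) onto $A_5$, then any factorization $F_2\to N\to A_5$ forces $A_5$ to be a quotient of $N$, so $N$ cannot be nilpotent, and no term of the lower central series lies in $\ker f$ no matter how ``deep'' you go. This is precisely the difference between having enough torsion-free quotients (targets are finite $p$-groups, which are nilpotent, so lower-central-series quotients suffice) and the factorization property (arbitrary finite targets), and it is why a different construction is needed.

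What the paper actually does for this step: for a free group $E$ and a map $f$ to a finite group, set $F=E\cap\ker f$ and factor through $E/[F,F]$; this quotient is torsion-free by Farkas' lemma (Lemma \ref{l:far}) and is (free abelian)-by-finite, hence torsion-free elementary amenable, and such Bieberbach-type groups \emph{can} surject onto arbitrary finite groups. Lemma \ref{l:ext} upgrades this to extensions of free groups by torsion-free elementary amenable groups, and the factorization property for RAAGs (Corollary \ref{I:factor}) then follows by induction over graph products (Theorem \ref{grfact}), using the splitting $G_\Gamma=G_{\Gamma-s}\ast_{G_{\Lk(s)}}(G_{\Lk(s)}\times G_s)$ and applying Lemma \ref{l:ext} to the free kernel of the induced map $N\ast_L(L\times K)\to N\times K$. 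By contrast, the step you predicted would be the delicate one---descending from the ambient RAAG to $H$---is the easy, formal part: the factorization property passes to finite-index subgroups (Lemma \ref{l:fin}) and to retracts (Lemma \ref{I:ret}), so Corollary \ref{gf}, and with it Theorem \ref{vcs1}, follows at once from Theorem \ref{retract}.
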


The paper is organized as follows: In Sections 2 and 3 we investigate goodness and the factorization property. The most important results for us are that both conditions pass to finite index subgroups and retracts, while also holding for right-angled Artin groups. In Sections 4 and 5 we prove Theorem \ref{Atiyah1}. Section 6 is devoted to proving the Strong Atiyah conjecture for virtually cocompact special groups, and in Section 7 we look at an example of a link group that is good and not cohomologically complete.

\begin{Acknowledgements} I would like to thank Prof. Peter Linnell for sending a proof that finite index subgroups of right-angled Artin groups have enough torsion-free quotients. I would also like to thank my advisor Prof. Boris Okun for all of his help and insight throughout this paper. I would also like to thank an anonymous referee for helpful suggestions and for pointing out an error in an earlier proof of Theorem \ref{grfact}.
\end{Acknowledgements} 

\section{Good groups}

In this section we record a few of the facts known about good groups; in particular,  a result of Lorensen in \cite{MR2561762} that right-angled Artin groups are good.

 \begin{Definition} 
For a group $G$, let $\hat G = \varprojlim_{[G:H] < \infty} G/H,$ where the inverse limit is taken over the set of finite quotients of $G$. $\hat G$ is  the \emph{profinite completion} of $G$. For every $G$, there is a canonical homomorphism $i: G \rightarrow \hat G$ which sends $g \in G$ to the cosets $gH$.
 A  group $G$ is called \textit{good} if the homomorphism $$H^\ast(\hat G, M) = \varinjlim_{[G:H] < \infty} H^\ast(G/H, M) \overset{i^\ast}\rightarrow H^\ast(G, M)$$ is an isomorphism for every finite $G$-module $M$.
 \end{Definition}

\begin{Lemma}[{\cite[Exercise 2(a)]{s97}}] 
Suppose we have an exact sequence $1 \rightarrow H \rightarrow G \rightarrow Q \rightarrow 1$  with $Q$ finite and $H$ finitely generated. Then the induced sequence of profinite completions $1 \rightarrow \hat H \rightarrow \hat G \rightarrow Q \rightarrow 1$ is exact.
\end{Lemma}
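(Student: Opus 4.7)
The plan is to verify the three exactness conditions separately. Surjectivity of $\hat{G}\to Q$ is immediate: since $Q$ is finite it equals its own profinite completion, and the composition $G\to Q$ is already surjective, so the induced map on completions is too. The substantive work is in establishing injectivity of $\hat H\to\hat G$ and identifying its image with the kernel.

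For injectivity, I would show that the profinite topology on $H$ agrees with the topology it inherits as a subspace of $G$ (equipped with its profinite topology). Concretely, given any finite index subgroup $K\leq H$, the goal is to produce a finite index normal subgroup $L$ of $G$ with $L\cap H\leq K$. Because $H$ is normal in $G$, conjugation by $G$ permutes the finite index subgroups of $H$. Since $H$ is finitely generated, by a classical theorem of M.~Hall it has only finitely many subgroups of each given index, so the $G$-orbit of $K$ is finite. The normal core
\[
K_0:=\bigcap_{g\in G} gKg^{-1}
\]
is therefore a finite intersection of finite index subgroups of $H$, hence itself of finite index in $H$. By construction it is normal in $G$, and from the short exact sequence $1\to H/K_0\to G/K_0\to Q\to 1$ with both outer terms finite we see $G/K_0$ is finite. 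Thus $L:=K_0$ works. It follows that the natural map $\hat H\to\hat G$ is a topological embedding that realizes $\hat H$ as the closure $\overline{H}$ of $H$ in $\hat G$.

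For exactness at $\hat G$, the homomorphism $G\to Q$ (with $Q$ discrete) is continuous for the profinite topology and so extends uniquely to a continuous $\hat G\to Q$; its kernel $N$ is closed and of index $|Q|$. Clearly $H\subseteq N$, hence $\overline{H}\subseteq N$. Conversely, in each finite quotient $G/L$ (with $L$ of finite index and normal in $G$), the kernel of the induced surjection $G/L\to Q$ is precisely $HL/L$, the image of $H$; passing to the inverse limit gives $N\subseteq\overline{H}$, and combining yields $N=\overline{H}=\hat H$. The main obstacle---and the reason both hypotheses appear---is the injectivity step: without finite generation of $H$ the orbit $\{gKg^{-1}\}$ could be infinite and the normal core argument would collapse, while without finiteness of $Q$ the core $K_0$ could fail to have finite index in $G$.
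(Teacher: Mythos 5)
The paper does not actually prove this lemma---it is quoted from Serre's \emph{Galois Cohomology} (Exercise 2(a)) and used as a black box---so there is no in-paper argument to compare against. Your proof is correct and is essentially the standard solution to that exercise: surjectivity onto $Q$ is immediate from density; the key point is that finite generation of $H$ (via M.~Hall's theorem that a finitely generated group has only finitely many subgroups of each finite index) makes the $G$-orbit of a finite-index $K\leq H$ finite, so the normal core $K_0$ is of finite index in $H$, normal in $G$, and of finite index in $G$ because $[G:H]=|Q|<\infty$; hence the profinite topology of $G$ induces that of $H$, giving injectivity of $\hat H\to\hat G$ with image $\overline H$, and exactness at $\hat G$ follows by comparing kernels in the finite quotients. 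One small wrinkle: the phrase ``the kernel of the induced surjection $G/L\to Q$ is $HL/L$'' only literally makes sense when $L\subseteq H$ (otherwise $G\to Q$ does not factor through $G/L$, and the natural target is the proper quotient $G/HL$ of $Q$); but the finite-index normal subgroups of $G$ contained in $H$ (e.g.\ the cores $L\cap H$) are cofinal, so restricting to them repairs the sentence without changing the conclusion $N=\overline H$. With that reading, the argument is complete.
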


\begin{Lemma}[{\cite[Exercise 2(b)]{s97}}] \label{good1}
Let $1 \rightarrow H \rightarrow G \rightarrow K \rightarrow 1$ be an extension with $H, K$ good and $H^\ast(H,M)$ finite for every G-module M. Then $G$ is good.
\end{Lemma}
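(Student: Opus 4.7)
The plan is to compare the Lyndon--Hochschild--Serre spectral sequences of the discrete extension $1 \to H \to G \to K \to 1$ and of the associated extension of profinite completions, and to deduce goodness of $G$ from goodness of $H$ and $K$ by the usual spectral-sequence comparison theorem. Fix a finite $G$-module $M$. On the discrete side one has the convergent first-quadrant spectral sequence
$$E_2^{p,q} = H^p(K, H^q(H, M)) \Rightarrow H^{p+q}(G, M).$$

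For the profinite analogue, I would first establish that the canonical maps fit into an exact sequence $1 \to \hat H \to \hat G \to \hat K \to 1$. This does not follow immediately from the previous lemma, which only treats finite quotients, but can be reached by an inverse limit argument: for each finite-index normal subgroup $U \trianglelefteq K$, the preimage $G_U \subseteq G$ sits in $1 \to H \to G_U \to K/U \to 1$ with finite quotient, to which the previous lemma applies. Passing to the limit over $U$ (together with compatible finite quotients of $H$) produces the desired exact sequence of profinite groups. The continuous LHS spectral sequence of this profinite extension then gives
$$\widehat E_2^{p,q} = H^p(\hat K, H^q(\hat H, M)) \Rightarrow H^{p+q}(\hat G, M),$$
and the canonical maps $H \to \hat H$, $K \to \hat K$, $G \to \hat G$ induce a morphism of spectral sequences $\widehat E \to E$ compatible with the maps on abutments.

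Next I would verify that this morphism is an isomorphism on $E_2$-pages. Since $H$ is good and $M$ is a finite $H$-module, $H^q(\hat H, M) \xrightarrow{\sim} H^q(H, M)$; by hypothesis this group is finite, so the induced $K$-action on it factors through a finite quotient and extends canonically to a continuous $\hat K$-action. Goodness of $K$ applied to this finite discrete $\hat K$-module then gives $H^p(\hat K, H^q(\hat H, M)) \xrightarrow{\sim} H^p(K, H^q(H, M))$. The comparison theorem for convergent first-quadrant spectral sequences forces the induced map $H^n(\hat G, M) \to H^n(G, M)$ to be an isomorphism for every $n$, which is precisely goodness of $G$.

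The main obstacle is the profinite bookkeeping: establishing exactness of $1 \to \hat H \to \hat G \to \hat K \to 1$ and setting up the continuous LHS spectral sequence on the profinite side with sufficient care that the comparison map really is induced by the canonical homomorphisms. Once these ingredients are in place, the remaining argument is a mechanical $E_2$-comparison driven by goodness of $H$, goodness of $K$, and the finiteness hypothesis on $H^q(H, M)$.
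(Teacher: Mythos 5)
The paper offers no proof of this lemma at all---it is quoted directly from Serre's Galois Cohomology (Exercise 2(b) of I.2.6)---and the Lyndon--Hochschild--Serre comparison you outline is indeed the standard solution of that exercise, so your overall strategy is the intended one. The $E_2$-comparison itself is fine: goodness of $H$ identifies $H^q(\hat H,M)$ with $H^q(H,M)$, finiteness of the latter makes it a finite discrete $\hat K$-module (the $K$-action factors through a finite quotient), goodness of $K$ finishes the $E_2$-identification, and the comparison theorem for the two convergent first-quadrant spectral sequences gives goodness of $G$.

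The genuine gap is in your treatment of exactness of $1\to\hat H\to\hat G\to\hat K\to 1$. Surjectivity of $\hat G\to\hat K$ and the identification of its kernel with the closure of the image of $H$ are automatic, but injectivity of $\hat H\to\hat G$ (equivalently, that $G$ induces the full profinite topology on $H$) is not, and your proposed derivation does not establish it: the earlier lemma you invoke for the extensions $1\to H\to G_U\to K/U\to 1$ requires $H$ to be \emph{finitely generated}, which is not among the hypotheses of the present lemma. (If $H$ is finitely generated your idea does work, and more directly than by a limit: for a single finite-index normal $U\trianglelefteq K$ one gets $\hat H\hookrightarrow\widehat{G_U}$, and $\widehat{G_U}\hookrightarrow\hat G$ because $G_U$ has finite index in $G$; but if $K$ has few finite-index subgroups this device says nothing.) In fact, in Serre's exercise and in the formulation of Grunewald--Jaikin-Zapirain--Zalesskii, the exactness of the completed sequence (equivalently, that every finite-index subgroup of $H$ contains some $H\cap G'$ with $G'$ of finite index in $G$) is an explicit additional hypothesis; it cannot be deduced from goodness of $H$ and $K$ and finiteness of $H^\ast(H,M)$ alone. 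So as written your proof of the lemma in the stated generality breaks at this step; it becomes correct either if you add that hypothesis to the statement, or if you assume $H$ finitely generated---which is harmless for this paper, since in its applications $H$ has a finite classifying space and the quotient is finite.
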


\begin{Lemma}[{\cite[Lemma 3.2]{gjz07}}] \label{good2}
Suppose $G$ is a good group. If $H$  is commensurable to $G$ then $H$  is good. In particular, goodness passes to finite index subgroups.
\end{Lemma}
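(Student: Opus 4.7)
My plan is to reduce the commensurability statement to two atomic operations that together generate it: (A) goodness descends from $G$ to a finite index subgroup, and (B) goodness ascends from $G$ to a finite extension. Given $H$ commensurable to $G$, we have finite index subgroups $G_0 \le G$ and $H_0 \le H$ with $G_0 \cong H_0$; then the chain $G \rightsquigarrow G_0 \cong H_0 \rightsquigarrow H_0^H \rightsquigarrow H$ (where $H_0^H$ denotes the normal core of $H_0$ in $H$) transports goodness from $G$ to $H$ by applying (A) twice and (B) once.

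For (A), let $H \le G$ be finite index and $M$ a finite $H$-module. The induced module $\operatorname{Ind}_H^G M$ is finite as a set and carries a natural $G$-action. Shapiro's lemma in discrete and in continuous profinite cohomology supplies natural isomorphisms
\[
H^{*}(G, \operatorname{Ind}_H^G M) \cong H^{*}(H, M), \qquad H^{*}(\hat G, \operatorname{Ind}_H^G M) \cong H^{*}(\hat H, M).
\]
The profinite version requires identifying $\hat H$ with an open subgroup of $\hat G$; after replacing $H$ by its normal core in $G$ (still of finite index), the preceding exactness lemma supplies exactly this identification. The canonical maps $G \hookrightarrow \hat G$ and $H \hookrightarrow \hat H$ fit the two Shapiro isomorphisms into a commutative square, and goodness of $G$ makes the $G$-side vertical an isomorphism, forcing the $H$-side vertical to be one as well. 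Hence $H$ is good.

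For (B), consider $1 \to G \to H \to F \to 1$ with $F$ finite; the preceding exactness lemma yields $1 \to \hat G \to \hat H \to F \to 1$. For a finite $H$-module $M$, compare the Lyndon--Hochschild--Serre spectral sequences
\[
E_2^{p,q} = H^{p}(F, H^{q}(G, M)) \Longrightarrow H^{p+q}(H, M),
\]
\[
\hat E_2^{p,q} = H^{p}(F, H^{q}(\hat G, M)) \Longrightarrow H^{p+q}(\hat H, M).
\]
Goodness of $G$ identifies the $E_2$-pages through the canonical map of spectral sequences, so the abutments agree: $H^{*}(H, M) \cong H^{*}(\hat H, M)$. (This avoids the finiteness hypothesis of the preceding Lemma by invoking the spectral sequences directly.)

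The main obstacle is the profinite Shapiro step in (A): one must verify that $\hat H$ genuinely sits inside $\hat G$ as an open, finite index subgroup, for otherwise the profinite version of Shapiro's lemma is not available. Reducing to the case when $H$ is normal via the normal core, then invoking the preceding exactness lemma, is the essential point; everything else is naturality of Shapiro's lemma and functoriality of the Lyndon--Hochschild--Serre spectral sequence.
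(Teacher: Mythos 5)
The paper gives no proof of this lemma---it is quoted directly from Grunewald, Jaikin-Zapirain and Zalesskii \cite{gjz07}---and your argument is essentially the standard proof from that source: Shapiro's lemma (discrete and continuous) for passage to finite-index subgroups, comparison of Lyndon--Hochschild--Serre spectral sequences over the finite quotient for finite extensions, and the reduction of commensurability to these two moves via a common finite-index subgroup and a normal core is correct (your observation that the finite quotient case needs no finiteness of $H^*(G,M)$, unlike Lemma \ref{good1}, is also the right point). The one step stated too loosely is the identification of $\hat H$ with an open subgroup of $\hat G$ for a possibly non-normal finite-index $H\le G$: replacing $H$ by its normal core and invoking the exactness lemma only shows that the completion of the \emph{core} is open in $\hat G$, and by itself this gives that the kernel of $\hat H\to\hat G$ is finite, not that it is trivial. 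The correct justification is that every finite-index subgroup $K\le H$ has finite index in $G$ and hence contains its $G$-core, a finite-index normal subgroup of $G$; thus the profinite topology of $G$ restricts to the full profinite topology of $H$, so $\hat H\to\hat G$ is injective with image the closure of $H$, open of index $[G:H]$, and both Shapiro isomorphisms apply (the same core argument shows the exactness statement holds without the finite generation hypothesis appearing in the quoted lemma). With that one-line repair your proof is complete and matches the cited argument.
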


\begin{Lemma}\label{good3}
Suppose $G$ is a good group and $H$  is a retract of $G$. Then $H$  is good.
\end{Lemma}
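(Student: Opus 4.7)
The plan is to exhibit the comparison map $H^*(\hat H, M)\to H^*(H,M)$ as a retract of the analogous comparison map for $G$, and then to invoke that a retract of an isomorphism is an isomorphism. Let $i\colon H\hookrightarrow G$ denote the inclusion and $r\colon G\to H$ the retraction, so $r\circ i=\mathrm{id}_H$. By the universal property of profinite completion, $i$ and $r$ induce continuous homomorphisms $\hat i\colon \hat H\to\hat G$ and $\hat r\colon \hat G\to\hat H$ satisfying $\hat r\circ\hat i=\mathrm{id}_{\hat H}$.

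Given any finite $H$-module $M$, promote it to a finite $G$-module $N:=r^*M$ by letting $G$ act through $r$. Because $r\circ i=\mathrm{id}_H$, restricting $N$ back along $i$ recovers $M$. Naturality of group cohomology in both the group and the coefficient module then produces the commutative diagram
\[
\begin{tikzcd}
H^*(\hat H,M) \ar[r,"\hat r^*"] \ar[d] & H^*(\hat G,N) \ar[r,"\hat i^*"] \ar[d] & H^*(\hat H,M) \ar[d] \\
H^*(H,M) \ar[r,"r^*"] & H^*(G,N) \ar[r,"i^*"] & H^*(H,M)
\end{tikzcd}
\]
in which every vertical arrow is the canonical comparison map and each horizontal composition is the identity (the latter by the functoriality of cohomology applied to $\hat r\circ\hat i=\mathrm{id}_{\hat H}$ and $r\circ i=\mathrm{id}_H$, respectively).

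Since $G$ is good and $N$ is a finite $G$-module, the middle vertical arrow is an isomorphism. A routine chase then shows the outer vertical arrow (the comparison map for $H$ with coefficients $M$) is itself an isomorphism: if it sends $x$ to zero, then $\hat r^* x$ maps to zero in $H^*(G,N)$, so injectivity of the middle map gives $\hat r^* x=0$, hence $x=\hat i^*\hat r^* x=0$; surjectivity is dual. As $M$ was an arbitrary finite $H$-module, $H$ is good.

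The argument is essentially formal, so there is no substantive obstacle. The one place to keep attention is the bookkeeping of coefficient systems: the given finite $H$-module must be promoted to a finite $G$-module via $r$ (not merely obtained as the restriction of a pre-existing $G$-module), which is precisely what makes the retraction hypothesis stronger than having $H$ as an arbitrary subgroup of $G$.
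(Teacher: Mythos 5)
Your proof is correct and follows essentially the same route as the paper, which simply observes by contravariant functoriality that the comparison map for $H$ is a direct summand (retract) of the comparison map for $G$ and hence an isomorphism. You have merely made explicit the coefficient bookkeeping ($N=r^*M$) and the retract-of-an-isomorphism diagram chase that the paper leaves implicit.
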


\begin{proof}
The cohomology of $G$ and $\hat G$ are both contravariant functors. Therefore, the canonical map  $H^n(\hat H, M) \rightarrow H^n(H, M)$ is a direct summand of the canonical map $H^n(\hat G, M) \rightarrow H^n(G, M)$. Since the latter is an isomorphism, so is the former.
\end{proof}

\begin{Definition}
Let $\gG$ be a finite simplicial graph with vertex set $S$. There is a \emph{right-angled Artin group $A_\gG$} associated to $\gG$ which is generated by $s_i \in S$, and has relations $s_is_j = s_js_i$ if and only if $s_i$ and $s_j$ span an edge of $\gG$.
\end{Definition}

For example, complete graphs produce free abelian groups, while graphs with no edges produce free groups.  It is also natural to consider the flag complex generated by $\gG$; we shall abuse notation and denote this $\gG$ as well. The flag complex is formed by gluing an $n$-simplex onto any complete subgraph of $\gG$ with $n$ vertices. Therefore, $n$-simplices of the flag complex $\gG$ correspond to $n$ pairwise commuting generators of the Artin group.

\begin{Theorem}[{\cite[Theorem  3.15]{MR2561762}}]\label{good} All right-angled Artin groups are good.
\end{Theorem}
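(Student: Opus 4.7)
The plan is to induct on the number of vertices of $\Gamma$. The base case is $\Gamma$ a single vertex, so $A_\Gamma \cong \BZ$, which is easily checked to be good: for any finite module $M$, both $H^*(\BZ, M)$ and the continuous cohomology $H^*(\widehat{\BZ}, M)$ are concentrated in degrees $\leq 1$ and agree there.

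For the inductive step, fix a vertex $v \in \Gamma$ and let $\operatorname{St}(v) = \Lk(v) \cup \{v\}$. The full subgraphs $\Gamma \setminus v$ and $\operatorname{St}(v)$ cover $\Gamma$ with intersection $\Lk(v)$, yielding the standard amalgamated free product decomposition
\[
A_\Gamma \;\cong\; A_{\Gamma \setminus v} *_{A_{\Lk(v)}} A_{\operatorname{St}(v)}, \qquad A_{\operatorname{St}(v)} \cong A_{\Lk(v)} \times \langle v\rangle.
\]
By the inductive hypothesis, $A_{\Gamma \setminus v}$ and $A_{\Lk(v)}$ are good. Applying Lemma \ref{good1} to the split short exact sequence $1 \to A_{\Lk(v)} \to A_{\Lk(v)} \times \BZ \to \BZ \to 1$ shows that $A_{\operatorname{St}(v)}$ is good, using that every RAAG has a finite classifying space (the Salvetti complex), so its cohomology with any finite coefficient module is finite in each degree.

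To close the induction I would show that goodness is preserved by the above amalgamation. The strategy is to compare the Mayer--Vietoris long exact sequences in group cohomology with finite coefficients for $A_\Gamma$ and for its profinite analog
\[
\widehat{A_\Gamma} \;\cong\; \widehat{A_{\Gamma \setminus v}} *_{\widehat{A_{\Lk(v)}}} \widehat{A_{\operatorname{St}(v)}},
\]
where the right-hand side is an amalgamated free product in the category of profinite groups. Goodness of the three factors together with the five lemma then yields goodness of $A_\Gamma$.

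The main obstacle is verifying the displayed profinite amalgamation identity; this amounts to showing that $A_{\Lk(v)}$ is closed in the profinite topologies of $A_{\Gamma \setminus v}$ and $A_{\operatorname{St}(v)}$. This follows because parabolic subgroups of a RAAG are retracts (send the generators outside the parabolic to the identity), and retracts of residually finite groups are separable. Once this identity is in place, the Mayer--Vietoris comparison closes the induction.
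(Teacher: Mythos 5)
Your proposal is correct and follows essentially the same route as the paper: the paper also splits off a vertex by induction, writing $A_\Gamma$ as the HNN extension $A_{\Gamma - s}\ast_{A_{\Lk(s)}}$ rather than as the amalgam over the star, and likewise compares the Mayer--Vietoris sequences for $A_\Gamma$ and $\widehat{A_\Gamma}$ via the five lemma, using the fact that $A_{\Gamma - s}$ and $A_{\Lk(s)}$ are retracts of $A_\Gamma$ to get the corresponding decomposition on the profinite side. One small precision point: closedness (separability) of $A_{\Lk(v)}$ is not quite the condition you need for the profinite amalgam identity and its Mayer--Vietoris sequence --- you need the profinite topology of $A_\Gamma$ to induce the \emph{full} profinite topology on the vertex and edge subgroups, so that $\widehat{A_{\Gamma\setminus v}}$, $\widehat{A_{\Lk(v)}}$ and the completion of the star subgroup inject into $\widehat{A_\Gamma}$ --- which is exactly what the retractions provide, so your underlying argument is the right one.
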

\begin{proof}
Let $A_\gG$ be the right-angled Artin group based on a flag complex $\gG,$ and choose $s \in \gG$. Then $A_\gG$ decomposes as the HNN extension $A_{\gG - s} \ast_{A_{\Lk(s)}}$, where $\Lk(s)$ denotes the link of $s$ in $\gG$. Assume $A_{\gG - s}$ and ${A_{\Lk(s)}}$ are good by induction on the number of generators. On the level of completions $\widehat A_{\gG - s}$ and $\widehat A_{\Lk(s)}$ inject into $\widehat A_\gG$, as both subgroups are retracts of the latter (this fact is shown in \cite{MR2561762}.) Therefore, using the Mayer--Vietoris Sequence for HNN extensions and the Five Lemma we conclude that $A_\gG$ is good:

$$ 
	\begin{CD}
		\dots\leftarrow H^n(A_{\Lk(s)},M) @<i<<  H^n(A_{\gG - s},M) @<j<< H^n(A_\gG, M) \leftarrow \dots \\
		@V\hat iVV @V\hat iVV @V\hat iVV\\
		\dots\leftarrow H^n(\hat A_{\Lk(s)},M) @<i<<  H^n(\hat A_{\gG - s},M) @<j<< H^n(\hat A_\gG, M) \leftarrow \dots \\
			\end{CD}
	$$

\end{proof}

\begin{Definition} If $\gG$ is a finite simplicial graph with vertex set $S$, suppose we are given a family of groups $(G_s)_{s \in S}$. The \emph{graph product} $G_\gG$ is defined as the quotient of the free product of the $(G_s)_{s \in S}$ by the normal subgroup generated by the commutators of the form $[g_s,g_t]$ with $g_s \in G_s, g_t \in G_t$, where $s$ and $t$ span an edge of $\gG$.  
\end{Definition}

A graph product is a natural generalization of many interesting groups. For example, a right-angled Artin group is a graph product with each vertex group $\BZ$, and a right-angled Coxeter group is a graph product with each vertex group $\BZ_2$. A similar proof 
shows that graph products of good groups are good, and graph products of cohomologically complete groups are cohomologically complete.

\section{The factorization property}

In this section we introduce the factorization property and show that it holds for right-angled Artin groups.
Recall that the class of elementary amenable groups is the smallest class of groups which contains all finite and abelian groups and is closed under taking subgroups, quotients, extensions, and directed unions. For example, every solvable group is elementary amenable, and every elementary amenable group is amenable.

\begin{Definition} A group $H$ has \emph{the factorization property} if any map from $H$ to a finite group factors through a torsion-free elementary amenable group. Equivalently, for any finite index normal subgroup $K \trianglelefteq H$ there is $U \trianglelefteq H$ such that $U \trianglelefteq K$ and $H/U$ is torsion-free elementary amenable.
\end{Definition}

 \begin{Lemma}\label{l:fin}
A finite index subgroup of a group with the factorization property has the factorization property. \end{Lemma}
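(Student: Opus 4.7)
The plan is to reduce a factorization problem for $H'$ (the finite index subgroup) to a factorization problem for $H$, using the normal core construction to produce a normal subgroup of $H$ that we can feed into the hypothesis.

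More precisely, let $[H:H']<\infty$ and suppose $\phi\colon H'\to F$ is a map to a finite group. I would first set $K=\ker\phi$, which is a finite index normal subgroup of $H'$, and therefore has finite index in $H$ as well. Since $K$ need not be normal in $H$, I would pass to its normal core $K_0=\bigcap_{h\in H}hKh^{-1}$, a finite index normal subgroup of $H$ contained in $K$. Applying the factorization property of $H$ to $K_0$ gives a normal subgroup $U\trianglelefteq H$ with $U\leq K_0$ such that $H/U$ is torsion-free elementary amenable.

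The finishing step uses two easy observations. First, $U\trianglelefteq H$ implies $U\trianglelefteq H'$, so $U$ is a legal candidate for the factorization problem inside $H'$. Second, the natural inclusion $H'\hookrightarrow H$ descends to an injection $H'/U\hookrightarrow H/U$; since the class of torsion-free elementary amenable groups is closed under taking subgroups, $H'/U$ is torsion-free elementary amenable. Finally, $U\leq K_0\leq K=\ker\phi$, so $\phi$ factors as $H'\twoheadrightarrow H'/U\to F$, which is the desired factorization.

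There is no real obstacle here: the only step that requires any thought is producing a normal subgroup of $H$ (rather than just of $H'$) sitting inside $\ker\phi$, which is handled by the normal core. The rest is routine, using closure of torsion-free elementary amenable groups under subgroups.
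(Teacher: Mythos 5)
Your proof is correct and is essentially the same as the paper's: your normal core $K_0=\bigcap_{h\in H}hKh^{-1}$ is exactly the kernel of the coset action $H\to\Sym(H/K)$ that the paper factors through a torsion-free elementary amenable group, and both arguments finish by observing that $H'/U$ (resp.\ the image of the subgroup in $M$) is torsion-free elementary amenable as a subgroup of a torsion-free elementary amenable group. You invoke the equivalent normal-subgroup formulation of the factorization property, which the paper's definition explicitly provides, so this is just a repackaging of the same argument.
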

\begin{proof}
Let $K < G$ be a finite index subgroup of a group with the factorization property, and let $f: K \to P$ be
a map to a finite group. The left action of $G$ on the set of cosets $G/\ker f$ gives a map $g:G\to \Sym (G/\ker f)$ to a finite permutation group, which by assumption factors through a torsion-free elementary amenable group $M$.
Denote by $h$ the map $G \to M$.
Since $\ker f$ is the stabilizer of the trivial coset, $\ker g \subset \ker f$,  and therefore $f$ factors through  $g(K)$. Therefore, $f$ factors through  $h(K)$ which is torsion-free elementary amenable as a subgroup of $M$.

\begin{center}
\begin{tikzcd}[row sep=scriptsize, column sep=scriptsize]
&& P \\
&  h(K) \ar{ur}   \ar{dr} \\
K \ar{rr}{g}\ar{ur}{h} \ar{dd} \ar[bend left = 40]{uurr}{f} && g(K)\ar{uu} \ar[hook]{dd} \\
 & M  \ar{dr} \ar[hookleftarrow,  crossing over]{uu}\\
G \ar{rr}{g}  \ar{ur}{h} && \Sym (G/\ker f) \\
\end{tikzcd}
\end{center}
\end{proof}

\begin{Lemma}\label{I:ret}

Suppose $G$ has the factorization property. Then any retract of $G$ has the factorization property.
\end{Lemma}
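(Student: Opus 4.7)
The plan is to exploit the retraction to pull a finite-target homomorphism of the retract back to $G$, apply the factorization property of $G$ there, and then restrict the resulting factorization to the retract.

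More precisely, let $H$ be a retract of $G$, with inclusion $i\colon H\hookrightarrow G$ and retraction $r\colon G\to H$ satisfying $r\circ i=\mathrm{id}_H$. Let $f\colon H\to P$ be an arbitrary homomorphism to a finite group $P$. I would first form the composition $f\circ r\colon G\to P$, which is a map from $G$ to a finite group. By the factorization property of $G$, this composition factors through a torsion-free elementary amenable group, say $f\circ r=\alpha\circ h$ for some torsion-free elementary amenable group $M$, a homomorphism $h\colon G\to M$, and a homomorphism $\alpha\colon M\to P$.

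The second step is to precompose with $i$ and use $r\circ i=\mathrm{id}_H$ to get
\[
f = f\circ\mathrm{id}_H = f\circ r\circ i = \alpha\circ(h\circ i),
\]
so that $f$ factors as $H\xrightarrow{h\circ i} M\xrightarrow{\alpha} P$. Since $M$ is torsion-free elementary amenable, this exhibits the desired factorization of $f$ through a torsion-free elementary amenable group, verifying the factorization property for $H$. (If one prefers the factorization through the image, the image $(h\circ i)(H)$ is a subgroup of $M$ and is therefore torsion-free elementary amenable, as the class of torsion-free elementary amenable groups is closed under taking subgroups.)

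There is no serious obstacle here: the argument is a one-line diagram chase once the retraction is used to pull $f$ back to $G$. If I wished, I could instead argue with the equivalent normal-subgroup formulation of the factorization property, but the map-theoretic formulation makes the retraction argument cleanest.
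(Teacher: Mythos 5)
Your proposal is correct and follows the same route as the paper's proof: pull $f$ back to $G$ via the retraction, factor $f\circ r$ through a torsion-free elementary amenable group $M$ using the hypothesis on $G$, and then precompose with the inclusion to factor $f$ itself through (a subgroup of) $M$. Nothing further is needed.
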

\begin{proof}
Let $p: G \rightarrow H$ be a retraction, and let $f: H \rightarrow P$ be a map from $H$  onto a finite group. By assumption on $G, f \circ p$ factors through a torsion-free elementary amenable group $M$. By pre-composing with $i: H \rightarrow G,$ we see $f$ factors though a subgroup of $M$, and hence $H$ has the factorization property.

\begin{center}
\begin{tikzpicture}[thick, scale = 0.6]

\node at (0,0) {$H$};
\node at (4,0) {$P$};
\node at (0,-3) {$G$};
\node at (4,-3) {$M$};

\node at (2.1, .5) {$f$};
\node at (-.5,-1.5) {$p$};
\node at (.5, -1.5) {$i$};
\node at (2.1, -2.6) {$g$};

\draw [->] (.2,-.5) -- (.2,-2.5);
\draw [<-] (-.2,-.5) -- (-.2,-2.5) ;

\draw [<-] (4,-.5) -- (4,-2.5);
\draw [->] (.5,0) -- (3.5,0);
\draw [->] (.5,-3) -- (3.5,-3);

\end{tikzpicture}

 \end{center}
 \end{proof}

\begin{Lemma}\label{l:far} If $E$ is a free group and $F$ is a normal subgroup, then $E/[F,F]$ is torsion-free.
\end{Lemma}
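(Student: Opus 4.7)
The plan is to prove the contrapositive: if $x \in E$ satisfies $x^n \in [F,F]$ for some $n \geq 1$, then already $x \in [F,F]$. I would split into the cases $x \in F$ and $x \notin F$. The first case is immediate: since $F$ is a subgroup of the free group $E$, it is itself free by Nielsen--Schreier, so $F/[F,F]$ is free abelian and hence torsion-free, forcing $x \in [F,F]$.

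So suppose $x \notin F$. Then $\bar x := xF$ has some finite order $k \geq 2$ in $E/F$ with $k \mid n$, so $x^k \in F$; applying torsion-freeness of $F/[F,F]$ to $(x^k)^{n/k} = x^n \in [F,F]$ upgrades this to $x^k \in [F,F]$, which I will contradict. Replacing $E$ by the preimage of $\langle \bar x \rangle$ (still free as a subgroup of $E$), we may assume $E/F = \langle \bar x \rangle \cong \BZ/k$. The main tool is then the Magnus/Crowell embedding: for any free basis $y_1,\ldots,y_s$ of $E$, Fox calculus produces an injective $\BZ[E/F]$-module map $F/[F,F] \hookrightarrow \BZ[E/F]^s$ sending $y \mapsto (\overline{\partial y/\partial y_i})_i$, where $\overline{\ \cdot\ }$ is the projection $\BZ[E] \to \BZ[E/F]$.

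The product rule for Fox derivatives gives $\partial(x^k)/\partial y_i = (1 + x + \cdots + x^{k-1})\,\partial x/\partial y_i$, which projects to $N \cdot \overline{\partial x/\partial y_i}$ in $\BZ[\BZ/k]$, where $N = 1 + \sigma + \cdots + \sigma^{k-1}$ is the norm element and $\sigma = \bar x$ generates $\BZ/k$. Since $\sigma^i N = N$ for all $i$, one has $aN = \epsilon(a)\,N$ for every $a \in \BZ[\BZ/k]$, so $N \cdot \overline{\partial x/\partial y_i} = 0$ precisely when the augmentation $\epsilon(\partial x/\partial y_i)$---which is just the exponent sum of $y_i$ in $x$---vanishes. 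If this were to hold for every $i$, then $x$ would lie in $[E,E]$; but $E/F = \BZ/k$ is abelian, so $[E,E] \subseteq F$, contradicting $x \notin F$. Hence $x^k$ has nonzero image in $F/[F,F]$, contradicting $x^k \in [F,F]$. The only nontrivial external input is the injectivity of the Magnus/Crowell embedding; everything else is bookkeeping with Fox derivatives and the norm element.
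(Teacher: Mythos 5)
Your argument is correct, but it takes a genuinely different route from the paper. The paper's proof (following Farkas, via Agol's observation) is purely group-theoretic: given $g \in E \setminus F$, it forms $G = \langle F, g\rangle$, notes that $G/F$ is cyclic so $[G,G] \le F$, and exhibits $G/[F,F]$ as an extension of the free abelian group $G/[G,G]$ by $[G,G]/[F,F]$, which is free abelian as a subgroup of $F/[F,F]$; an extension of torsion-free by torsion-free is torsion-free, and any potential torsion element of $E/[F,F]$ lies in $F/[F,F]$ or in such a $G/[F,F]$. You instead reduce to $E/F$ finite cyclic and run the classical Fox-calculus/Magnus-embedding computation, detecting nontriviality of $x^k$ modulo $[F,F]$ via the norm element of $\BZ[\BZ/k]$ and an exponent sum of $x$. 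Both arguments are complete; the paper's is more elementary (no free differential calculus), while yours is more explicit, pinpointing exactly which coordinate of the Magnus map obstructs $x^k \in [F,F]$. Two small points of polish: after passing to the preimage of $\langle \bar x\rangle$, the free group may have infinite rank, so the basis should be indexed by an arbitrary set (harmless, since the Fox derivatives of a fixed word involve only finitely many generators, so the Magnus map lands in the direct sum); and you actually need only the easy half of the Magnus embedding, namely that $w \mapsto (\overline{\partial w/\partial y_i})_i$ kills $[F,F]$ and so descends to $F/[F,F]$ --- injectivity is never used, since your contradiction is that a class assumed to be trivial has a nonzero image.
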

\begin{proof}

The statement is a corollary of  the proof of Lemma 5 in \cite{Farkas}. Our argument comes from a comment by Agol \cite{MO80657}. Note that $[F,F]$ is a normal subgroup of E as $[F,F]$ is characteristic in F. Choose $g \in E-F$. The subgroup $G$ = $\langle F,g \rangle$ generated by $F$ and $g$ has $G / F$ cyclic. Consider the normal series $G \trianglerighteq [G,G] \trianglerighteq [F,F]$. As $G/F$ is abelian, $[G,G] \trianglelefteq F;$ therefore $[G,G]/[F,F]$ is free abelian as a subgroup of $F/[F,F]$. Therefore, $G/[F,F]$ is torsion-free by the sequence $$1 \rightarrow [G,G]/[F,F] \rightarrow G/[F,F] \rightarrow G/[G,G] \rightarrow 1.$$

\end{proof}

 \begin{Lemma}\label{l:ext}
Extensions of free groups by  torsion-free elementary amenable groups have the factorization property.
\end{Lemma}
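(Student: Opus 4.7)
The plan is to produce, for each finite-index normal subgroup $K \trianglelefteq G$ of a given extension $1 \to E \to G \to Q \to 1$ (with $E$ free and $Q$ torsion-free elementary amenable), a single explicit normal subgroup $U \trianglelefteq G$ with $U \le K$ such that $G/U$ is torsion-free elementary amenable. This is the equivalent formulation of the factorization property noted just after its definition, so producing such a $U$ finishes the proof.

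Set $F := K \cap E$. Since both $E$ and $K$ are normal in $G$, so is $F$, and $[E:F] = [KE:K] \le [G:K] < \infty$, so $F$ has finite index in $E$. Motivated by Lemma \ref{l:far}, I would then take $U := [F,F]$. Being characteristic in $F$ and $F$ being normal in $G$, $U$ is automatically normal in $G$; clearly $U \le F \le K$. The quotient fits into the extension
$$1 \to E/[F,F] \to G/[F,F] \to Q \to 1,$$
since $E \cap [F,F] = [F,F]$, so the job reduces to checking that $G/[F,F]$ is torsion-free and elementary amenable.

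Elementary amenability is a bookkeeping step: $E/[F,F]$ is an extension of the free abelian group $F/[F,F]$ (because $F$ is free of finite index in $E$) by the finite group $E/F$, hence is elementary amenable; and the outer quotient $Q$ is elementary amenable by hypothesis, so the middle is too. For torsion-freeness, Lemma \ref{l:far} gives that $E/[F,F]$ is torsion-free, and $Q$ is torsion-free by hypothesis. Then I would invoke the general fact that an extension of a torsion-free group by a torsion-free group is torsion-free: any finite-order element of $G/[F,F]$ maps to a finite-order element of $Q$, which must be trivial, hence sits in the kernel $E/[F,F]$, which has no torsion.

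The real content, and the only place where the hypothesis that $E$ is free (rather than merely $F$) is used, is Lemma \ref{l:far}: without it one would only know that $F/[F,F]$ is torsion-free, and the finite extension $E/F$ could a priori reintroduce torsion before one ever reached $Q$. Once this is in hand, the rest of the argument is the routine splicing of two extensions with torsion-free, elementary amenable strata.
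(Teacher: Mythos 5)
Your proposal is correct and follows essentially the same route as the paper: set $F = E \cap K$ (the paper writes $F = E \cap \ker f$), kill $[F,F]$, use Lemma \ref{l:far} for torsion-freeness of $E/[F,F]$, and splice the resulting torsion-free elementary amenable extensions to see that $G/[F,F]$ is torsion-free elementary amenable. The only cosmetic difference is that you phrase it via the equivalent finite-index-normal-subgroup formulation rather than via a map to a finite group.
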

\begin{proof}
Let $1 \to E \to H \to M \to 1$ be such an extension with $E$ free and $M$ torsion-free elementary amenable. Let $f: H \to P$ be a map to a finite group. 
Let $F = E \cap \ker f$.
Then $E/ F$ is finite, and $E / [F,F] $ is torsion-free by Lemma \ref{l:far} and elementary amenable by the exact sequence $$1 \rightarrow F/ [F,F] \rightarrow E / [F,F] \rightarrow E/F \rightarrow 1.$$ 
Now $f$ factors through $H/ [F,F]$ 
which is torsion-free elementary amenable by the exact sequence $$1 \rightarrow E/[F,F] \rightarrow H/[F,F] \rightarrow H/E \rightarrow 1.$$
\end{proof}

\begin{Theorem}\label{grfact} Graph products of groups with the factorization property have the factorization property.
\end{Theorem}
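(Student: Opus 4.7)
My plan is to first reduce to the case where each vertex group is torsion-free elementary amenable, and then induct on $|V(\Gamma)|$.

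For the reduction, given $f\colon G_\Gamma \to P$ to a finite group, I would apply the factorization property of each $G_s$ to obtain a surjection $G_s \twoheadrightarrow M_s$ onto a torsion-free elementary amenable group through which $f|_{G_s}$ factors. The relations $[g_s,g_t]=1$ for $s\sim t$ in $G_\Gamma$ give $[f(g_s),f(g_t)]=1$ in $P$, so the induced maps $M_s\to P$ also commute along edges; by the universal property of the graph product, $f$ factors through a surjection $G_\Gamma \twoheadrightarrow M_\Gamma$ onto the graph product of the $M_s$'s. So it suffices to prove the theorem under the additional assumption that each vertex group is torsion-free elementary amenable.

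Now induct on $n=|V(\Gamma)|$; the base case $n=1$ is trivial. For the inductive step, pick a vertex $s$. If $s$ is adjacent to every other vertex, then $G_\Gamma = G_s\times G_{\Gamma-s}$ and I would verify closure of the factorization property under direct products directly: for $g\colon A\times B \to Q$, the restrictions $g|_A,g|_B$ have commuting images in $Q$, so they factor through torsion-free elementary amenable groups $N_A,N_B$, and $g$ factors through the torsion-free elementary amenable group $N_A\times N_B$. Otherwise, use the amalgamated product decomposition $G_\Gamma = G_{\Gamma-s}*_{G_{\Lk(s)}}(G_s\times G_{\Lk(s)})$, in which both outer factors and the amalgamating subgroup are graph products over strictly smaller vertex sets, hence have the factorization property by induction. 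Crucially, $G_{\Lk(s)}$ is a retract of both outer factors via the homomorphisms sending the non-link generators to $1$, which will be essential for arranging compatible factorizations.

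The heart of the argument is to show this amalgamated product has the factorization property. Given $g\colon A*_C B \to Q$ with $A,B,C$ all having the factorization property and $C$ a retract of both $A,B$, I would first factor $g|_C$ through a torsion-free elementary amenable $N_C$, then use the retractions $A\to C,\ B\to C$ to pull this back to torsion-free elementary amenable quotients of $A$ and $B$ that agree with it on $C$. Intersecting with the normal subgroups provided by the factorization properties of $A$ and $B$ applied to $g|_A$ and $g|_B$, and iteratively refining on $C$ to align the intersections, one obtains $U_A\trianglelefteq A$ and $U_B\trianglelefteq B$ with $U_A\cap C = U_B\cap C = U_C$ and all three quotients torsion-free elementary amenable; the map $g$ then factors through the amalgamated product $N_A*_{N_C}N_B$ of torsion-free elementary amenable groups. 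The main obstacle is the final step: showing that such an amalgamated product of torsion-free elementary amenable groups has the factorization property. I would attack this via Bass--Serre theory on the Bass--Serre tree of the amalgamation, producing inside the kernel of any further finite quotient a normal subgroup that acts freely on the tree and is therefore free, with torsion-free elementary amenable quotient, so that Lemma~\ref{l:ext} delivers the factorization.
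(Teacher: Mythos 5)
Your overall architecture---induction on the number of vertices, the cone case as a direct product, the decomposition $G_\Gamma = G_{\Gamma-s}\ast_{G_{\Lk(s)}}(G_{\Lk(s)}\times G_s)$, and the plan to finish with Bass--Serre theory plus Lemma~\ref{l:ext}---is essentially the paper's, and your reduction to torsion-free elementary amenable vertex groups and the direct product case are fine. The genuine gap sits exactly where you flag ``the main obstacle.'' Bass--Serre theory only gives the easy half: a normal subgroup meeting every conjugate of the vertex groups trivially is free. To produce such a subgroup \emph{with torsion-free elementary amenable quotient} you need a homomorphism from $N_A\ast_{N_C}N_B$ to a torsion-free elementary amenable group that is injective on $N_A$ and on $N_B$; equivalently, you must realize $N_A$ and $N_B$ compatibly over $N_C$ inside a common torsion-free elementary amenable group. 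That is an amalgamation problem in this class of groups; nothing in your sketch supplies it, and it does not come for free for an arbitrary amalgam of torsion-free elementary amenable groups---this is precisely why the paper never reduces to a bare amalgam. (Your ``iteratively refining'' of the normal subgroups is also vaguer than necessary, though repairable in one step: with $V_A\trianglelefteq A$, $V_B\trianglelefteq B$ given by the factorization property and contained in the relevant kernels, set $U_C=V_A\cap V_B\cap C$, $U_A=V_A\cap r_A^{-1}(U_C)$, $U_B=V_B\cap r_B^{-1}(U_C)$.)

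The retract structure you noticed, but then used only to align the normal subgroups, is what closes the main gap---and using it recovers the paper's trick. With $U_A,U_B$ as above, the retractions $r_A,r_B$ descend to retractions $N_A\to N_C$ and $N_B\to N_C$, and $a\mapsto(a,\bar r_A(a))$, $b\mapsto(\bar r_B(b),b)$ define a homomorphism $N_A\ast_{N_C}N_B\to N_A\times N_B$ which is injective on each vertex group; its kernel acts freely on the Bass--Serre tree, hence is free, the quotient is torsion-free elementary amenable, and Lemma~\ref{l:ext} finishes. The paper achieves this with less bookkeeping by never leaving the special form of the second factor: it factors $f$ through $N\ast_L(L\times K)$, where $N$ is the torsion-free elementary amenable quotient for $G_{\Gamma-s}$, $L$ is the image of $G_{\Lk(s)}$ in $N$, and $K$ is the quotient for $G_s$, then maps this to $N\times K$ in the obvious way; the kernel is free and Lemma~\ref{l:ext} applies. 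So your route can be completed, but only after supplying the torsion-free elementary amenable quotient that is injective on the vertex groups; as written, that crucial ingredient is missing.
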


\begin{proof}

Let  $ f:G_\gG \to P$ be a map to a finite group. By induction on the number of vertices of $\gG$, assume the restriction of  $f$ to $G_{\gG - s}$ factors through a torsion-free elementary amenable group $N$. By hypothesis, the restriction of $f$ to $G_s$  factors through a torsion-free elementary amenable group $K$. If $G_\gG = G_{\gG - s} \times G_s$, we can factor $f$ through the product $N \times K$.
Otherwise, if $G_{\Lk(s)}$ denotes the graph product based on the link of $s \in  \Gamma,$ $G_\gG$ splits as an amalgamated  product $$G_\gG =G_{\gG - s} \ast_{G_{\Lk(s)}} (G_{\Lk(s)} \times G_s).$$
Let $L$ be the image of $G_{\Lk(s)}$ in $N$.  The above factorizations induce a factorization of $f$ through $ N \ast_L (L \times K)$. By mapping $N$ and $L \times K$ into $N \times K$, we get a map  $ N \ast_L (L \times K) \rightarrow N \times K$. Since the kernel of this map is free and $N \times K$ is torsion-free elementary amenable, we can apply Lemma \ref{l:ext} to factor $f$ through a torsion-free elementary amenable group.

\end{proof}

 $\BZ$ trivially has the factorization property, so as a corollary we have: 

\begin{Corollary}\label{I:factor} Right-angled Artin groups have the factorization property.
\end{Corollary}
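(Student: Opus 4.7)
The plan is to observe that this is essentially an immediate consequence of Theorem \ref{grfact}, so the work reduces to verifying that the hypotheses apply to right-angled Artin groups.

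First I would recall from the definition that a right-angled Artin group $A_\gG$ based on a finite simplicial graph $\gG$ is precisely the graph product in which every vertex group is $\BZ$. So once I know that $\BZ$ has the factorization property, Theorem \ref{grfact} applied to the graph $\gG$ with constant vertex data $\BZ$ will immediately yield the conclusion.

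Next I would verify that $\BZ$ has the factorization property, which is essentially immediate: given any homomorphism $f : \BZ \to P$ with $P$ finite, the map $f$ factors as the identity $\BZ \to \BZ$ followed by $f$, and $\BZ$ is itself torsion-free elementary amenable (being abelian and torsion-free). So $\BZ$ trivially satisfies the definition.

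Combining these two observations, $A_\gG$ is a graph product of groups with the factorization property, and Theorem \ref{grfact} hands us the result. There is no real obstacle here; the conceptual content was packaged into Theorem \ref{grfact}, in particular into the amalgamated product analysis involving $G_\gG = G_{\gG - s} \ast_{G_{\Lk(s)}}(G_{\Lk(s)} \times G_s)$ and the application of Lemma \ref{l:ext}, and the corollary just instantiates that machinery.
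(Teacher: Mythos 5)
Your proof is correct and matches the paper's argument: the paper likewise deduces the corollary from Theorem \ref{grfact} after noting that $\BZ$ trivially has the factorization property (any map to a finite group factors through $\BZ$ itself, which is torsion-free elementary amenable). Nothing further is needed.
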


\section{Proof of Theorem \ref{Atiyah1}}

Our strategy for Theorem \ref{Atiyah1} follows that of Linnell and Schick in \cite{ls07}. We quickly give an overview of the proof, which will take up the next two sections.

 Given a finite extension $1 \rightarrow H \rightarrow G \overset{f}\rightarrow Q \rightarrow 1$ with $H$ torsion-free, Linnell and Schick give conditions on $H$ so that $f$ factors through an elementary amenable group $G/U$ with $\lcm(G/U) = \lcm(G)$. This is enough to show the conjecture for $G$, which we record as a key lemma.

\begin{Lemma}[{\cite[Theorem  2.6]{ls07}}] \label{tfea}  Let $1\rightarrow H \rightarrow G \rightarrow M \rightarrow 1$ be an extension where $H$ is torsion-free, satisfies the Strong Atiyah conjecture, $M$ is elementary amenable and $\lcm(M) = \lcm(G)$. Then $G$ satisfies the Strong Atiyah conjecture.
\end{Lemma}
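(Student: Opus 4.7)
The plan is to work in Linnell's division closure $\mathcal{D}(G) \subset \mathcal{U}(G)$, the smallest subring of the algebra of operators affiliated to the group von Neumann algebra that contains $\mathbb{C} G$ and is closed under taking inverses of regular elements. A standard reformulation (going back to Linnell) makes the algebraic Strong Atiyah conjecture for $G$ equivalent to the statement that $\mathcal{D}(G)$ is semisimple Artinian with every simple factor of the form $M_{n_i}(D_i)$ for division rings $D_i$ and integers $n_i$ dividing $\lcm G$. My target is therefore this structural description of $\mathcal{D}(G)$.

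Since $H$ is torsion-free and satisfies the Strong Atiyah conjecture, $\mathcal{D}(H)$ is a skew field; call it $D$. Picking a set-theoretic section $s\colon M \to G$ and transporting conjugation, I obtain a crossed product $D * M$ sitting inside $\mathcal{U}(G)$ and containing $\mathbb{C} G$. Because $M$ is elementary amenable, the Kropholler--Linnell--Moody circle of results applies: $D * M$ satisfies the Ore condition, and its classical ring of fractions $Q$ is semisimple Artinian with simple factors $M_{n_i}(D_i)$ satisfying $n_i \mid \lcm M$. One verifies this by induction along the elementary amenable hierarchy: finite groups by Maschke, virtually abelian groups by explicit Ore localization, and the general case by closure under extensions and directed unions.

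It remains to identify $\mathcal{D}(G)$ with $Q$. A regular element of a semisimple Artinian ring is automatically a unit, so $Q$ is division-closed inside $\mathcal{U}(G)$; as it contains $\mathbb{C} G$, this gives $\mathcal{D}(G) \subseteq Q$. Conversely, every element of $D * M$ already lies in $\mathcal{D}(G)$, and $\mathcal{D}(G)$ is itself closed under inverting regular elements, so $Q \subseteq \mathcal{D}(G)$. The hypothesis $\lcm M = \lcm G$ then upgrades the divisibility $n_i \mid \lcm M$ to the required $n_i \mid \lcm G$. I expect the main obstacle to be the structural control of $Q$ along the elementary amenable hierarchy: each finite subgroup of $M$ contributes a matrix block to $Q$ via the crossed-product structure, and tracking torsion carefully through each successive Ore localization---while ensuring the size of each matrix block stays bounded by $\lcm M$---is where the delicate bookkeeping lies.
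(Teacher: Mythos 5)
The paper itself does not prove this lemma---it is quoted verbatim from Linnell--Schick \cite{ls07}*{Theorem 2.6}---and your outline is essentially a compressed reconstruction of their argument (division closure, the crossed product $\mathcal{D}(KH)\ast M$ inside $\mathcal{U}(G)$, Ore localization along the elementary amenable hierarchy). However, there is a genuine gap at your very first step, and it propagates to the end. The Strong Atiyah conjecture for $G$ with $\lcm G<\infty$ is \emph{not} equivalent to the purely ring-theoretic statement that $\mathcal{D}(G)$ is semisimple Artinian with simple factors $M_{n_i}(D_i)$ and $n_i\mid\lcm G$. Knowing the Wedderburn shape of $\mathcal{D}(G)$ gives no control over the von Neumann trace: if $z_i$ is the central idempotent of the factor $M_{n_i}(D_i)$ and $S_i$ its simple module, then the $\ell^2$-dimension of the kernel of a matrix over $\mathbb{Z}G$ is an integer combination of the numbers $\dim_{\mathcal{U}(G)}\bigl(\mathcal{U}(G)\otimes_{\mathcal{D}(G)}S_i\bigr)=\tr_G(z_i)/n_i$, and $\tr_G(z_i)$ could a priori be irrational; the matrix sizes $n_i$ are simply not the quantity the conjecture constrains. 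The correct reformulation (Linnell; see L\"uck's book, Theorem 10.38, or Section 2 of \cite{ls07}) requires, besides semisimplicity, that $K_0$ of the division closure be generated by classes induced from finite subgroups, and it is exactly this $K$-theoretic input that produces the quantization of dimensions.

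Consequently your closing move---``the hypothesis $\lcm M=\lcm G$ upgrades $n_i\mid\lcm M$ to $n_i\mid\lcm G$''---does not yet say anything about $\ell^2$-Betti numbers. What the Linnell--Schick proof actually does at this point is invoke Moody's induction theorem along the elementary amenable hierarchy: every finitely generated projective module over the Ore quotient $Q$ of $D\ast M$ is stably induced from $D\ast F$ with $F\le M$ finite; since $D$ is a skew field, such an induced module has von Neumann dimension in $\frac{1}{|F|}\mathbb{Z}\subseteq\frac{1}{\lcm M}\mathbb{Z}$, and the hypothesis $\lcm M=\lcm G$ is precisely what converts this into the bound $\frac{1}{\lcm G}\mathbb{Z}$ demanded by the conjecture (finite subgroups of $M$ need not lift to $G$, so this equality is a genuine hypothesis, not a formality). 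You gesture at ``bookkeeping'' of torsion through the hierarchy, but the missing ingredient is this trace/dimension computation via induction from finite subgroups; without it, the structural description of $Q=\mathcal{D}(G)$---even granting that identification, which itself needs the nontrivial fact that regular elements of $D\ast M$ remain non-zero-divisors in $\mathcal{U}(G)$---does not yield the Strong Atiyah conjecture.
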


The factorization property can be thought of as a condition on $H$ which guarantees a lot of torsion-free elementary amenable quotients. Consider one of these quotients, i.e. a subgroup $U \trianglelefteq H$ with $H/U$ torsion-free elementary amenable. It is natural to make each $U$ into a normal subgroup of $G$, denoted by $U^G$, and consider the quotient $G/U^G$. While $G/U^G$ is always elementary amenable, it is tricky to control its torsion. Using the factorization property, we show that if all the quotients are bad in the sense that they have a lot of torsion, this implies a splitting  of $Q$ to the profinite completion $\hat G$. With some work,  we can use goodness of $G$ to guarantee that at least one of the quotients $G/U^G$ has $\lcm(G/U) = \lcm(G)$.

We now try to make the above ideas precise and prove our main theorem.
\begin{Definition}
For a CW-complex $Y$, let $\pi_S^\ast(Y)$ and $\tilde \pi_S^*(Y)$ denote the \emph{unreduced and reduced stable cohomotopy groups} of $Y$ respectively. If $Y$ is a point, then the stable cohomotopy of $Y$ is by definition isomorphic to the stable homotopy groups of spheres --- in this case we shorten $\pi_S^*(Y)$ to $ \pi_S^*$. If $G$ is a discrete group, $\tilde \pi_S^\ast(G)$ is defined to be the reduced stable cohomotopy of the classifying space $BG$. If $\hat G$ is the profinite completion of $G$, define $$\tilde \pi_S^*(\hat G) :=  \varinjlim_{[G:N] < \infty} \tilde \pi_S^*(G/N).$$ More details can be found in Section 4.4 of  \cite{ls07}.
\end{Definition}

The next theorem is a profinite version of Theorem 4.27 in \cite{ls07}. It is the main tool used to control torsion in these finite extensions.

\begin{Theorem}\label{annoying}
Suppose we have an exact sequence $$1 \rightarrow H \rightarrow G \rightarrow Q \rightarrow 1$$ where Q is a finite $p$-group and $H$  is good and has a finite classifying space. Assume the induced exact sequence of profinite completions $$1 \rightarrow \hat H \rightarrow \hat G \rightarrow Q \rightarrow 1$$ splits. Then the original sequence also splits.
\end{Theorem}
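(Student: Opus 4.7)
The proof will closely mirror \cite[Theorem 4.27]{ls07}, with profinite completion playing the role of pro-$p$ completion throughout. The underlying idea is to detect splittings of group extensions as classes in stable cohomotopy, then transport these across the comparison $BG \to B\hat G$ using goodness.

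First, I would observe that $G$ itself is good. By Lemma \ref{good1} applied to the given extension $1 \to H \to G \to Q \to 1$ (with $Q$ finite, hence trivially good, and $H$ good by hypothesis), $G$ is good. In particular, the natural map $H^*(\hat G; M) \to H^*(G; M)$ is an isomorphism for every finite $G$-module $M$.

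Second, I would translate splittings into stable cohomotopy, following the formalism of \cite[Section 4.4]{ls07}. A section $Q \to G$ of the given extension corresponds to a class in $\tilde \pi_S^0(BG)$ whose transfer to $\tilde \pi_S^0(BQ)$---identified via the Segal--Carlsson theorem with the $I$-adically completed Burnside ring $\hat A(Q)$---is the idempotent corresponding to the trivial $Q$-subgroup. The splitting of the profinite extension $1 \to \hat H \to \hat G \to Q \to 1$ yields an analogous class in $\tilde \pi_S^0(\hat G) = \varinjlim \tilde \pi_S^0(BG/N)$.

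Third, I would compare $\tilde \pi_S^*(BG)$ and $\tilde \pi_S^*(\hat G)$ via the Atiyah--Hirzebruch spectral sequence $E_2^{p,q} = H^p(X; \pi_S^q) \Rightarrow \pi_S^{p+q}(X)$. Goodness of $G$, together with $H$ having a finite classifying space (so $H^*(BG;\BZ)$ is finitely generated in each degree), should imply that the induced map of spectral sequences becomes an isomorphism after the appropriate profinite completion of the abutment. Consequently, the splitting class in $\tilde \pi_S^0(\hat G)$ is the image of a class in $\tilde \pi_S^0(BG)$, which in turn provides the desired section $Q \to G$.

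The main obstacle will be the third step: managing the completions of the cohomotopy groups, verifying that the comparison map is an isomorphism in the relevant range, and identifying the resulting class as a genuine group-theoretic splitting rather than just an abstract cohomotopy element. This amounts to rerunning the Linnell--Schick argument---originally formulated for pro-$p$ completion under cohomological $p$-completeness---in the profinite setting, where goodness (cohomology agreement with arbitrary finite coefficients) plays the role of cohomological completeness and allows the argument to run at all primes simultaneously rather than one prime at a time.
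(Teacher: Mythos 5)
Your overall framework is the right one---this theorem is proved as a profinite analogue of \cite[Theorem 4.27]{ls07}, via stable cohomotopy and the Atiyah--Hirzebruch spectral sequence, with goodness replacing cohomological $p$-completeness---but your third step, as stated, has a genuine gap. You propose to show that the comparison map between $\tilde \pi_S^*(G)$ and $\tilde \pi_S^*(\hat G)$ becomes an isomorphism (after suitable completion) and then to pull the splitting class on the profinite side back to a class for $G$. Goodness, however, only identifies cohomology with \emph{finite} coefficients, so the row $t=0$ of the spectral sequence, $H^s(-,\pi_S^0)=H^s(-,\BZ)$, is not controlled at all; no isomorphism of abutments (completed or not) is available, and in fact the paper explicitly proves something strictly weaker than even injectivity of $\tilde \pi_S^0(\hat G)\rightarrow \tilde \pi_S^0(G)$. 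The actual argument compares the two spectral sequences only where goodness applies, obtaining isomorphisms on the diagonal $E_\infty^{k,-k}$ for $s+t\le 0$, and then uses the convergence results \cite[Propositions 4.40, 4.43, 4.47]{ls07} for the skeletal filtration $F_k^0$ to deduce, by induction on $k$, injections $\tilde \pi_S^0(\hat G)/F_k^0(\hat G)\rightarrow \tilde \pi_S^0(G)/F_k^0(G)$; combined with the split injectivity of $\tilde \pi_S^0(Q)\rightarrow \tilde \pi_S^0(\hat G)$ coming from the profinite section and the triviality of $\cap_k F_k^0(Q)$, this yields only the injectivity of $\tilde \pi_S^0(Q)\rightarrow \tilde \pi_S^0(G)$.

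The second missing ingredient is precisely the point you flag as ``the main obstacle'': converting cohomotopy information back into a group-theoretic section. This is handled not by identifying a splitting class, but by the criterion of Adem recorded as \cite[Theorem 4.28]{ls07}: if $H$ has finite cohomological dimension and $Q$ is a finite $p$-group, the extension $1\rightarrow H\rightarrow G\rightarrow Q\rightarrow 1$ splits if and only if $G\rightarrow Q$ induces an injection $\tilde \pi_S^0(Q)\rightarrow \tilde \pi_S^0(G)$. It is because this criterion demands only injectivity that the partial spectral sequence comparison above suffices; without citing (or reproving) it, your Segal--Carlsson/Burnside-ring discussion does not close the argument, since you would still need to know that the class you transport is realized by an honest homomorphism $Q\rightarrow G$.
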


The idea behind Theorem \ref{annoying} is that a splitting on the exact sequence of profinite completions should induce an injection of the degree zero reduced stable cohomotopy $i^\ast: \tilde \pi_S^0(\hat G) \rightarrow \tilde \pi_S^0(G)$. We actually prove something slightly weaker than this; but we end up with a description of what an element of the kernel should look like. 
Since $Q$ splits to $\hat G$, $\tilde \pi_S^0(Q) \rightarrow \tilde \pi_S^0(\hat G)$ is injective, and with some work we conclude an injection  $\tilde \pi_S^0(Q) \rightarrow \tilde \pi_S^0(G)$.
By the following result of \cite{ls07}, where it is attributed to A. Adem, this guarantees a splitting $Q \rightarrow G$.

\begin{Theorem} [{\cite[Theorem  4.28]{ls07}}] Let $H$ be a discrete group with finite cohomological dimension. Suppose we have an extension $$1 \rightarrow H  \rightarrow G \overset{f}\rightarrow  Q \rightarrow 1$$ where $Q$ is a finite $p$-group. The extension above splits if and only if the epimorphism $G \rightarrow Q$ induces an injection $\tilde \pi_S^0(Q) \rightarrow \tilde \pi_S^0(G)$.
\end{Theorem}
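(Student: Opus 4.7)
The \emph{if} direction is functorial: if $s \colon Q \to G$ is a group-theoretic section of $f$, then $Bs \colon BQ \to BG$ satisfies $(Bf) \circ (Bs) = \mathrm{id}_{BQ}$, so $(Bs)^* \circ f^* = \mathrm{id}$ on $\tilde \pi_S^0(Q)$, and $f^*$ is split injective.

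For the converse, my plan is to combine Carlsson's theorem on the Segal conjecture with obstruction theory enabled by the finite-cohomological-dimension hypothesis on $H$. Carlsson's theorem identifies $\pi_S^0(BQ)$ with the $I$-adic completion of the Burnside ring $A(Q)$ of the finite $p$-group $Q$, so non-zero elements of $\tilde \pi_S^0(BQ)$ are detected by virtual transfer classes associated to subgroups of $Q$. The hypothesis that $f^* \colon \tilde \pi_S^0(Q) \to \tilde \pi_S^0(G)$ is injective then guarantees that enough of these transfer classes survive in $\tilde \pi_S^0(BG)$ to obstruct the ``extension is non-split'' scenario.

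To convert that cohomotopic information into an honest splitting, I would next analyze the Atiyah--Hirzebruch-type spectral sequence
\[
E_2^{p,q} = H^p(BQ;\tilde \pi_S^q(BH)) \Longrightarrow \tilde \pi_S^{p+q}(BG)
\]
associated to the fibration $BH \to BG \to BQ$. Because $H$ has finite cohomological dimension, the coefficient groups $\tilde \pi_S^q(BH)$ vanish outside a controlled range and the spectral sequence becomes manageable; the successive $k$-invariants obstructing a topological section of $Bf$ appear as differentials hitting the edge column $E_\infty^{0,\ast}$, and in lowest degree they reduce to the extension class in $H^2(Q;H)$. Injectivity of $f^*$ on $\tilde\pi_S^0$ forces these differentials to vanish, producing a topological section $BQ \to BG$ of $Bf$; since $BQ$ and $BG$ are both aspherical, such a section is realized by a genuine homomorphism $Q \to G$ splitting $f$.

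The main obstacle will be this middle step: converting the purely stable-homotopy injectivity hypothesis into vanishing of the specific algebraic obstructions that detect a section. This is precisely where both hypotheses are essential. The input from the Segal conjecture depends on $Q$ being a finite $p$-group (so Carlsson's completion theorem gives the clean identification with the Burnside ring), while the finite cohomological dimension of $H$ is what cuts off what would otherwise be an infinite tower of obstructions, allowing one to detect the section via finitely many classes in $\tilde \pi_S^0$.
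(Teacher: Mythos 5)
The paper does not prove this statement; it is quoted verbatim from \cite{ls07} (Theorem 4.28 there, attributed to A.~Adem), so your attempt has to stand on its own. Your ``if'' direction is correct and is the standard functoriality argument. The converse, however, has a genuine gap at exactly the step you flag as ``the main obstacle.'' First, note that a homotopy section of $Bf\colon BG\to BQ$ is \emph{equivalent} to a splitting of the extension (the spaces are aspherical and the primary --- in fact only --- obstruction to a section of an aspherical fibration is the extension class), so ``produce a topological section of $Bf$'' is not a reduction of the problem; it \emph{is} the problem. Second, the proposed mechanism does not connect the hypothesis to that obstruction: the $k$-invariants obstructing a section live in $H^{k+1}(Q;\pi_k(\mathrm{fiber}))$ (nonabelian in degree $2$) and are not differentials in the stable-cohomotopy Atiyah--Hirzebruch spectral sequence of $BH\to BG\to BQ$, so the assertion ``injectivity of $f^*$ on $\tilde\pi_S^0$ forces these differentials to vanish'' is an unproved restatement of the theorem rather than an argument. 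A side error: finite cohomological dimension of $H$ does \emph{not} make $\tilde\pi_S^q(BH)$ vanish outside a controlled range --- for a finite-dimensional complex these groups are typically nonzero for all $q\ll 0$ (they involve the stable stems), which is precisely why convergence has to be handled with care (as the present paper does in Section 5).

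You are right that Carlsson's theorem (the Segal conjecture) for the finite $p$-group $Q$ is the essential input, but the missing bridge from cohomotopy injectivity to an actual splitting is an \emph{equivariant fixed-point theorem}, not an obstruction/AHSS argument. The proof in \cite{ls07} runs as follows: since $\mathrm{cd}\,H<\infty$, one builds a finite-dimensional contractible $G$-CW complex $X$ on which $H$ acts freely; then $X/H$ is a finite-dimensional $Q$-CW complex whose Borel construction $(X/H)_{hQ}$ is homotopy equivalent to $BG$, and the map to $BQ$ is the one induced by $f$. Jackowski's fixed-point theorem for $p$-group actions \cite{j88} (whose proof rests on Carlsson's theorem) says that if $\tilde\pi_S^0(BQ)\to\tilde\pi_S^0((X/H)_{hQ})$ is injective, then $(X/H)^Q\neq\emptyset$. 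A $Q$-fixed point lifts to an $H$-orbit in $X$ preserved by $G$; because $H$ acts freely, the stabilizer in $G$ of a point of that orbit meets $H$ trivially and maps isomorphically onto $Q$, which is the desired splitting. So your outline identifies the right external ingredient but omits the fixed-point step that actually converts the $\tilde\pi_S^0$ hypothesis into a subgroup of $G$; as written, the argument does not go through.
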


Therefore, Theorem \ref{annoying} follows from the following technical lemma, which we prove in the next section.

\begin{Lemma}\label{annoying2}
Suppose that we have an exact sequence $$1 \rightarrow H \rightarrow G \rightarrow Q \rightarrow 1$$ where Q is a finite $p$-group and $H$  is good and has a finite classifying space. Assume the induced exact sequence of profinite completions $$1 \rightarrow \hat H \rightarrow \hat G \rightarrow Q \rightarrow 1$$ splits. Then the induced map $i^\ast: \tilde \pi_S^0(Q) \rightarrow \tilde \pi_S^0(G)$ is injective.
\end{Lemma}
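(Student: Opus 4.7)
The plan is to adapt the proof of Theorem 4.27 in \cite{ls07}, replacing the pro-$p$ completion and cohomological $p$-completeness used there by the profinite completion and goodness here.

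First I would observe that $G$ itself is good: by Lemma \ref{good1} applied to the extension $1 \to H \to G \to Q \to 1$, since $Q$ is finite (hence trivially good), $H$ is good, and $H^*(H, M)$ is finite for every finite $G$-module $M$ (because $H$ has a finite classifying space and $M$ is finite). Consequently the canonical map $H^*(\hat G, M) \to H^*(G, M)$ is an isomorphism for every finite $G$-module $M$.

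Next I would feed this into the Atiyah--Hirzebruch spectral sequence for stable cohomotopy, $E_2^{p,q}(\gG) = H^p(B\gG, \pi_S^q) \Rightarrow \tilde \pi_S^{p+q}(\gG)$, comparing $\gG = G$ with the directed system $\{G/N\}$ whose limit defines $\tilde \pi_S^*(\hat G)$. By Serre's finiteness theorem the coefficient groups $\pi_S^q$ are finite for every $q \neq 0$, so the goodness step of the previous paragraph yields an isomorphism on every $E_2$-term with $q \neq 0$. In the reduced theory $\tilde E_2^{0,0}$ vanishes, so on the antidiagonal $p+q = 0$ contributing to $\tilde \pi_S^0$, and on every differential entering or leaving it (whose other endpoint sits in a row with coefficient $\pi_S^{q'}$ for some $q' < 0$, hence finite), the coefficient groups are finite and the comparison is an isomorphism. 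Thus $\tilde E_\infty^{p,-p}(\hat G) \to \tilde E_\infty^{p,-p}(G)$ is an isomorphism for each $p$.

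The splitting $s: Q \to \hat G$ exhibits the canonical map $\tilde \pi_S^0(Q) \to \tilde \pi_S^0(\hat G)$ as split injective, with retraction $s^*$. Combined with the isomorphism of associated gradeds from the previous step, this should yield the desired injection $\tilde \pi_S^0(Q) \to \tilde \pi_S^0(G)$. The main obstacle is the extension problem for the filtration: agreement of $E_\infty$-pages does not by itself imply that $i^*: \tilde \pi_S^0(\hat G) \to \tilde \pi_S^0(G)$ is injective, which is precisely why the paper warns that it proves ``something slightly weaker.'' The remedy I anticipate is to describe $\ker i^*$ explicitly in terms of the filtration and then use the splitting $s$ to verify that this kernel meets the image of $\tilde \pi_S^0(Q)$ trivially, rather than asserting a full isomorphism $\tilde \pi_S^0(\hat G) \cong \tilde \pi_S^0(G)$; arranging this intersection argument cleanly is where the most delicate bookkeeping will be required.
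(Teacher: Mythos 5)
Your proposal is correct and follows essentially the same route as the paper: goodness of $G$ via Lemma \ref{good1}, comparison of the Atiyah--Hirzebruch spectral sequences using finiteness of $\pi_S^t$ for $t<0$ and vanishing of the reduced $(0,0)$-term so that the bad $t=0$ row never touches the antidiagonal, split injectivity of $\tilde \pi_S^0(Q) \rightarrow \tilde \pi_S^0(\hat G)$, and a filtration argument in place of a full isomorphism $\tilde \pi_S^0(\hat G) \cong \tilde \pi_S^0(G)$. The bookkeeping you defer is exactly what the paper carries out: the convergence results of \cite{ls07} (Propositions 4.43 and 4.47) give injections $F_{k}^0(\hat G)/F_{k+1}^0(\hat G) \rightarrow F_{k}^0(G)/F_{k+1}^0(G)$, an induction on $k$ then shows each $\tilde \pi_S^0(\hat G)/F_k^0(\hat G) \rightarrow \tilde \pi_S^0(G)/F_k^0(G)$ is injective, so any element of the kernel of $\tilde \pi_S^0(Q) \rightarrow \tilde \pi_S^0(G)$ lies in $\bigcap_k F^0_k(Q)$, which is trivial by Proposition 4.40 of \cite{ls07}.
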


Before proving our main theorem, we need the following four lemmas --- these correspond to Lemmas  4.10, 4.52, 4.54 and 4.59 in \cite{ls07}.

\begin{Lemma}\label{nice}
Suppose $H$ is finite index and normal in $G$, and $\mathcal{U} = \{U\}$ is a collection of normal subgroups of $H$ such that $H/U$ is torsion-free elementary amenable for each $U \in \mathcal{U}$. Let $\mathcal{U}^G = \{U^G\}$ be the corresponding collection of normal subgroups of $G$, where $U^G = \cap_{g \in G} \hspace{1mm} gUg^{-1}$. Then 
\begin{enumerate}[(i)] 

\item This is a finite intersection.
\item $H/U^G$ is torsion-free elementary amenable.
\item $G/U^G$ is elementary amenable.
\end{enumerate}

\end{Lemma}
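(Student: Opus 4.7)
The plan is to use the normality of $H$ in $G$, combined with the normality of each $U$ in $H$, to show that each conjugate $gUg^{-1}$ is actually itself a normal subgroup of $H$ and depends only on the coset $gH$. These two observations make the intersection defining $U^G$ into a finite one and give a clean diagonal embedding of $H/U^G$ into a product of quotients of $H$ by normal subgroups.

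For (i), I would observe that if $g_1 = g_2 h$ with $h \in H$, then since $U \trianglelefteq H$ we have
\[
g_1 U g_1^{-1} = g_2(hUh^{-1})g_2^{-1} = g_2 U g_2^{-1},
\]
so the family $\{gUg^{-1}\}_{g \in G}$ is indexed by $G/H$, which is finite; hence $U^G = \bigcap_{gH \in G/H} gUg^{-1}$ is a finite intersection. For (ii), since $H \trianglelefteq G$ we have $gHg^{-1} = H$, so $gUg^{-1}$ is a normal subgroup of $H$ for each $g$. The diagonal map
\[
\varphi \colon H \longrightarrow \prod_{gH \in G/H} H/(gUg^{-1}), \qquad h \mapsto \bigl(h \cdot gUg^{-1}\bigr)_{gH},
\]
has kernel exactly $U^G$, so $H/U^G$ embeds into this finite product. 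Conjugation by $g^{-1}$ gives an isomorphism $H/(gUg^{-1}) \xrightarrow{\sim} H/U$, so every factor of the product is torsion-free elementary amenable. Since both torsion-freeness and elementary amenability pass to finite direct products and to subgroups, $H/U^G$ is torsion-free elementary amenable.

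For (iii), the quotient $G/U^G$ contains $H/U^G$ as a normal subgroup of finite index with quotient isomorphic to $G/H$, which is finite. Since elementary amenable groups are closed under extensions, $G/U^G$ is elementary amenable.

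There is no genuine obstacle here; the lemma is essentially bookkeeping. The only point requiring care is that the normality of $H$ in $G$ (rather than merely $H \leq G$) is what makes each $gUg^{-1}$ a normal subgroup of $H$ and thus makes the target of $\varphi$ a well-defined product of quotient groups of $H$.
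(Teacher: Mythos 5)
Your proof is correct and follows essentially the same route as the paper: the conjugates $gUg^{-1}$ depend only on cosets of the finite-index subgroup $H$, the diagonal map into the finite product $\prod H/(gUg^{-1})$ has kernel $U^G$ and embeds $H/U^G$ in a torsion-free elementary amenable group, and the extension $1 \to H/U^G \to G/U^G \to G/H \to 1$ handles (iii). You simply spell out the details (coset-invariance of the conjugates, the conjugation isomorphisms $H/(gUg^{-1}) \cong H/U$, and closure properties) that the paper leaves implicit.
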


\begin{proof}
\begin{enumerate}[(i)]

\item This follows from $H$ being finite index in $G$ and $U$ being normal in $H$. 

\item $U^G$ is the kernel of the map $$H \rightarrow H/U \times H/g_1Ug^{-1} \times \dots \times H/g_nUg_n^{-1}$$ where the range is assumed to be torsion-free elementary amenable.

\item We have the exact sequence $$1 \rightarrow H/U^G \rightarrow G/U^G \rightarrow G/H \rightarrow 1.$$

\end{enumerate}
\end{proof}

\begin{Lemma}\label{split1}
Let $Q$ be a finite $p$-group in the exact sequence $$1 \rightarrow H \rightarrow G \xrightarrow{\pi} Q \rightarrow 1.$$ Assume that among all normal finite index subgroups of $G$, there is a cofinal system $U_i \trianglelefteq G$ with $U_i \subset H$, such that for each $i,$ the homomorphism $\pi_i$ in $$G \xrightarrow{p_i} G/U_i \xrightarrow{\pi_i} Q$$ has a split $s_i: Q \rightarrow G/U_i$. Then the profinite completion map $\hat \pi: \hat G \rightarrow Q$ has a split $Q \rightarrow \hat G$.
\end{Lemma}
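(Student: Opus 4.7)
The plan is to assemble the individual splittings $s_i$ into a compatible family via a compactness argument. Since $\{U_i\}$ is cofinal in the directed system of finite index normal subgroups of $G$, we have $\hat G = \varprojlim_i G/U_i$, and $\hat \pi$ corresponds to $\varprojlim_i \pi_i$ under this identification. Thus producing a splitting of $\hat \pi$ reduces to producing a coherent family $(s_i)_i$, where each $s_i : Q \to G/U_i$ is a splitting of $\pi_i$ and, whenever $U_j \subset U_i$, the composition of $s_j$ with the natural projection $q_{ji} : G/U_j \to G/U_i$ equals $s_i$.

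For each index $i$, let $S_i := \{ s : Q \to G/U_i \mid \pi_i \circ s = \mathrm{id}_Q \}$ denote the set of splittings of $\pi_i$. By hypothesis each $S_i$ is non-empty, and each is finite because $\mathrm{Hom}(Q, G/U_i)$ is. Whenever $U_j \subseteq U_i$, the identity $\pi_i \circ q_{ji} = \pi_j$ implies that post-composition with $q_{ji}$ sends $S_j$ into $S_i$. We thereby obtain an inverse system of non-empty finite sets $\{S_i\}$ indexed by the same directed set.

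A standard compactness argument (Tychonoff's theorem applied to $\prod_i S_i$ with the discrete topology on each factor, or equivalently a König-type diagonal argument) shows that the inverse limit of an inverse system of non-empty finite sets is non-empty. Pick any element $(s_i) \in \varprojlim_i S_i$. Coherence of this family is exactly the condition needed for the $s_i$ to assemble into a homomorphism $s : Q \to \varprojlim_i G/U_i = \hat G$, and $\hat \pi \circ s = \mathrm{id}_Q$ since this equality holds after projection to each factor $G/U_i$. The only real subtlety is the compatibility of the splittings: each $\pi_i$ may admit many splittings and there is no canonical choice, but compactness circumvents this by producing a coherent family non-constructively.
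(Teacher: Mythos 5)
Your proposal is correct and rests on the same compactness principle as the paper's argument (which follows Linnell--Schick): where the paper chooses lifts $g(q,i)\in G$ of the values $s_i(q)$ and extracts a convergent subsequence in the compact group $\hat G$, you repackage the identical fact as the non-emptiness of the inverse limit of the finite non-empty sets $S_i$ of splittings, using cofinality of $\{U_i\}$ to identify $\hat G$ with $\varprojlim_i G/U_i$. This is the same route, just organized so that the homomorphism property and $\hat\pi\circ s=\mathrm{id}_Q$ come for free from coherence rather than from passing limits through products.
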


\begin{proof}
The proof in \cite{ls07} works identically in this case. The idea is that for each $q \in Q,$ we choose elements $g(q,i) \in G$ with $p_i(g(q,i)) = s_i(q) \in G/U_i$. Since $\hat G$ is compact, each sequence $p_i(g(q,i))$ has a convergent subsequence, and since $Q$ is finite, we can assume there is one subsequence with $p_i(g(q,i)) \rightarrow g(q)$ for each $q$. The splitting is then defined as
$$s: Q \rightarrow \hat G, q \mapsto g(q).$$ 
 \end{proof}

\begin{Lemma}\label{collection}
Suppose $H$ is finitely generated and has the factorization property. Then there exists a collection $\mathcal{U}$ of subgroups $U \trianglelefteq H$ such that every finite index subgroup of $H$ contains a subgroup in $\mathcal{U}$, if $U \in \mathcal{U}$ then $H/U$ is torsion-free elementary amenable, and if  $U \in \mathcal{U}, V \in \mathcal{U},$ then $U \cap V \in \mathcal{U}$.
\end{Lemma}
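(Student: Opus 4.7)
The plan is to take $\mathcal{U}$ to be the collection of \emph{all} normal subgroups $U \trianglelefteq H$ for which $H/U$ is torsion-free elementary amenable, and then verify the three required properties in turn. The torsion-free elementary amenable quotient property is built into the definition of $\mathcal{U}$, so only cofinality and closure under intersection need to be argued.

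For cofinality, given a finite index subgroup $K \leq H$, I would first pass to its normal core $K' = \bigcap_{h \in H} hKh^{-1}$. Because $H$ is finitely generated, there are only finitely many subgroups of any given finite index, so $K'$ is a finite index normal subgroup of $H$ contained in $K$. The quotient $H \to H/K'$ is then a map to a finite group, and the factorization property supplies a torsion-free elementary amenable group $M$ and a factorization $H \to M \to H/K'$. Setting $U := \ker(H \to M)$ gives a normal subgroup of $H$ contained in $K' \subseteq K$, with $H/U$ embedding into $M$ and hence torsion-free elementary amenable; so $U \in \mathcal{U}$.

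For closure under intersection, if $U, V \in \mathcal{U}$ then the diagonal map $h(U \cap V) \mapsto (hU, hV)$ embeds $H/(U \cap V)$ into $H/U \times H/V$. A finite direct product of torsion-free elementary amenable groups is again torsion-free elementary amenable (the class is closed under extensions), and subgroups of torsion-free elementary amenable groups are torsion-free elementary amenable (closure of the class under taking subgroups, plus the fact that being torsion-free passes to subgroups). Hence $U \cap V \in \mathcal{U}$.

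The argument is largely formal; the one place where a hypothesis does real work is in the cofinality step, where finite generation of $H$ is needed to guarantee that an arbitrary finite index subgroup $K$ contains a finite index \emph{normal} subgroup to which the factorization property can be applied. If $H$ were not finitely generated, the normal core of $K$ could fail to have finite index, and one would be unable to convert the hypothesis (which speaks only about maps to finite groups) into information about $K$ itself.
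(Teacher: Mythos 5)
Your proof is correct, but it takes a genuinely different route from the paper's. The paper uses finite generation directly: it sets $K_n$ equal to the intersection of all subgroups of $H$ of index at most $n$ (finite generation guarantees there are only finitely many such subgroups, so $K_n$ has finite index), applies the factorization property to obtain $V_n \subseteq K_n$ with $H/V_n$ torsion-free elementary amenable, and takes $U_n = V_1 \cap \dots \cap V_n$. The resulting collection is a descending chain, so closure under intersection is automatic, and $H/U_n$ is torsion-free elementary amenable via the embedding into $H/V_1 \times \dots \times H/V_n$, exactly as in the proof of Lemma~\ref{nice}(ii). You instead take the maximal candidate collection --- all normal $U$ with $H/U$ torsion-free elementary amenable --- and check the two nontrivial properties separately: cofinality via normal cores plus the factorization property, and intersection-closure via the diagonal embedding $H/(U\cap V)\hookrightarrow H/U\times H/V$ together with closure of the class of torsion-free elementary amenable groups under subgroups and extensions. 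Both arguments establish the lemma; the paper's version produces a countable nested family, which is convenient (though not strictly required by the statement) in the later limiting arguments, while yours is more economical in its use of hypotheses.

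One factual correction: your closing remark about where finite generation enters is mistaken. The normal core of a finite index subgroup $K \le H$ always has finite index, with no hypothesis on $H$: it is the kernel of the permutation action of $H$ on the finite coset set $H/K$, so its index divides $[H:K]!$ (equivalently, $K$ has only finitely many conjugates, parametrized by $H/N_H(K)$). So your argument never actually uses finite generation; it is the paper's construction that needs it, namely to ensure that the intersection of all subgroups of index at most $n$ still has finite index.
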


\begin{proof}
Let $K_n$ be the intersection of all subgroups of H of index $\le n$. Since $K_n$ is finite index in $H$ and $H$ has the factorization property, there is a subgroup $V_n$ contained in $K_n$ such that $H/V_n$ is torsion-free elementary amenable.  Letting $U_n= \cap_{i=1}^n V_i$ and $\mathcal{U} = \{U_n\}$ satisfies the above conditions, and by the proof of Lemma \ref{nice}$(ii)$, $H/U_n$ is torsion-free elementary amenable for each $n$.

\end{proof}

\begin{Lemma}\label{split2}
Let $Q$ be a finite $p$-group and let $1 \rightarrow H \rightarrow G \xrightarrow{\pi} Q \rightarrow 1$ be an exact sequence of groups. Assume $H$ is finitely generated and has the factorization property. Let $\mathcal{U}$ be a collection of normal subgroups of $H$ as in Lemma \ref{collection}, and define $\mathcal{U}^G$ as above. If each $G/U^G$ contains a subgroup of order $p^k$, then there is a subgroup $Q_0 < Q$ of order $p^k$ splitting back to $\hat G_0 \le \hat G$, where $G_0 = \pi^{-1}(Q_0)$.
\end{Lemma}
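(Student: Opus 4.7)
My plan is to identify a single subgroup $Q_0 \le Q$ of order $p^k$ that lifts to a splitting of $G/U^G \twoheadrightarrow Q$ for a cofinal subcollection of $U \in \mathcal{U}$, and then apply Lemma \ref{split1} to the restricted extension $1 \to H \to G_0 \to Q_0 \to 1$, where $G_0 = \pi^{-1}(Q_0)$, to produce the splitting $Q_0 \to \hat G_0$.

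First, for each $U \in \mathcal{U}$ pick, by hypothesis, a subgroup $P_U \le G/U^G$ of order $p^k$. Since $U \subseteq H$ and $H \trianglelefteq G$, we have $U^G \subseteq H$, so there is an exact sequence $1 \to H/U^G \to G/U^G \to Q \to 1$. By Lemma \ref{nice}(ii) the kernel $H/U^G$ is torsion-free, so the composition $P_U \hookrightarrow G/U^G \twoheadrightarrow Q$ is injective; let $Q(U) \le Q$ denote its image, a subgroup of order $p^k$. By Lemma \ref{collection} we may arrange $\mathcal{U}$ as a descending chain $U_1 \supseteq U_2 \supseteq \cdots$, yielding a descending chain $U_1^G \supseteq U_2^G \supseteq \cdots$ of finite index normal subgroups of $G$. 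Since $Q$ is finite, it has only finitely many subgroups of order $p^k$, so pigeonhole produces an infinite subsequence $\mathcal{U}_0 = \{U_{n_i}\}$ along which $Q(U_{n_i}) = Q_0$ is constant.

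For each $U \in \mathcal{U}_0$, since $P_U$ maps isomorphically onto $Q_0$, it lies in $G_0/U^G$ and splits the projection $G_0/U^G \twoheadrightarrow Q_0$. Each $U^G$ is normal in $G_0$ because it is normal in the ambient $G$, and I claim $\{U^G : U \in \mathcal{U}_0\}$ is cofinal in the finite index normal subgroups of $G_0$: for any such $V \trianglelefteq G_0$, the intersection $V \cap H$ is a finite index normal subgroup of $H$, so some $U \in \mathcal{U}_0$ satisfies $U \subseteq V \cap H$, whence $U^G \subseteq U \subseteq V$. Feeding this cofinal system and the splittings $P_U$ into Lemma \ref{split1}, applied to $1 \to H \to G_0 \to Q_0 \to 1$, yields the desired splitting $Q_0 \to \hat G_0$.

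The main subtlety is the pigeonhole selection of $Q_0$: the hypothesis only asserts that $G/U^G$ contains \emph{some} order-$p^k$ subgroup, not a coherent family across varying $U$, and the conclusion requires a cofinal rather than merely infinite subfamily. The feature of Lemma \ref{collection} that breaks this deadlock is that $\mathcal{U}$ is closed under finite intersection, which is precisely what lets us arrange $\mathcal{U}$ as a descending chain and thereby promote an infinite constant subsequence of $Q(U_n)$ into a cofinal subcollection of $\mathcal{U}$.
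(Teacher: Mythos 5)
Your proof is correct, and its overall skeleton matches the paper's: fix a single $Q_0$ using finiteness of the set of order-$p^k$ subgroups of $Q$, show $Q_0$ splits back to a cofinal system of finite quotients of $G_0$ by normal subgroups contained in $H$, and invoke Lemma \ref{split1}. Where you differ is in how $Q_0$ is selected. The paper cites an ``easy argument'' from \cite{ls07}: since a splitting to $G/U^G$ pushes forward to a splitting to $G/V^G$ whenever $U \subseteq V$ (the kernel $V^G/U^G \subseteq H/U^G$ is torsion-free), the nonempty finite sets $S_U$ of order-$p^k$ subgroups of $Q$ lifting to $G/U^G$ form a downward-directed family, so one $Q_0$ lifts to \emph{all} $G/U^G$ simultaneously; the paper then feeds all finite-index normal subgroups of $G_0$ contained in $H$ into Lemma \ref{split1}. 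You instead avoid the monotonicity observation by a pigeonhole along a chain, getting $Q_0$ lifting only along a cofinal subfamily $\mathcal{U}_0$, which is indeed enough for Lemma \ref{split1}; your verification that finite subgroups of $G/U^G$ inject into $Q$ via torsion-freeness of $H/U^G$ is exactly the point the paper leaves implicit. One small gloss to tighten: closure under finite intersection does not by itself let you ``arrange $\mathcal{U}$ as a descending chain'' --- an intersection-closed family need not be a chain. Either use the explicit collection $U_n = \cap_{i\le n} V_i$ built in the proof of Lemma \ref{collection} (which is literally a descending chain), or note that since $H$ is finitely generated it has only countably many finite-index subgroups $K_1, K_2, \dots$, choose $W_n \in \mathcal{U}$ with $W_n \subseteq K_n$ and replace $\mathcal{U}$ by the cofinal chain $U_n = W_1 \cap \dots \cap W_n$; the hypothesis on $G/U^G$ still holds for these, and the rest of your argument goes through unchanged.
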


\begin{proof}
By an easy argument in \cite{ls07}, there exists $Q_0$ of order $p^k$ splitting back to $G/U^G$ for all elements in $\mathcal{U}^G$. If $Q_0$ splits to $G/U^G$, then it splits to the quotient $G/J$ for all $U^G \trianglelefteq J \trianglelefteq H$. Amongst the normal finite index subgroups of $G$ (or $G_0$), those contained in $H$ form a cofinal collection. Since $H$ has the factorization property, each finite index $K \trianglelefteq H$ contains $U^G \in \mathcal{U}^G,$ so $Q_0$ splits to each finite quotient $G/K$ (or $G_0/K)$. By Lemma \ref{split1}, this implies that $Q_0$ splits to $\hat G_0$.

\end{proof}

\begin{proof}[Proof of Theorem \ref{Atiyah1}]	
	
Using \cite[Lemma 2.4, Corollary 2.7]{ls07}, we only need to prove the case of $Q$ being a finite $p$-group.
 Let $\mathcal{U}$ be a collection of normal subgroups of $H$ as in Lemma \ref{collection}, and let $\mathcal{U}^G$ as above. If $\lcm(G/U^G) = \lcm(G)$ for any $U^G \in \mathcal{U}^G,$ we would be done by Lemma \ref{tfea}, Lemma \ref{nice}, and the extension $$1 \rightarrow U^G \rightarrow G \rightarrow G/U^G \rightarrow 1.$$ 

Since $H$ and $H/U^G$ are torsion-free and $Q$ is a finite $p$-group, $\lcm(G)$ and 
$\lcm (G/U^G)$ are powers of $p$.
Now, suppose each of the above groups $G/U^G$ had a torsion subgroup of order $p^k$. By Lemma \ref{split2}, there is a subgroup $Q_0$ in $Q$ of order $p^k$ and a splitting $Q \rightarrow \hat G_0$. Theorem \ref{annoying} now implies $Q_0$ splits to $G_0,$ which implies $\lcm(G) \ge p^k$. Therefore, there exists a subgroup $U^G$ such that $\lcm(G/U^G) = \lcm(G)$.
\end{proof}

\begin{Remark} If $G$ is torsion free, there is a quicker proof that does not require Theorem \ref{annoying}. The idea is the same as above, but a splitting $Q_0 \rightarrow \hat G_0$ is an easy contradiction as goodness implies $H^\ast (\hat G_0, \BZ/p\BZ) = H^\ast (G_0, \BZ/p\BZ)$ is zero above some dimension, while $H^\ast(Q, \BZ/p\BZ)$ is not. In this case, we can also relax the assumption of $H$ having a finite classifying space to being finitely generated and having finite cohomological dimension.
\end{Remark}

\begin{Remark}
Schick has shown that the methods used in \cite{ls07} apply to the Baum--Connes conjecture with coefficients. Unsurprisingly, our results can be applied in the same way. We will just state our theorem as the proof is identical to his in \cite{sch07}.

\begin{Theorem}\label{bc5}
Suppose $1\to H\to G\to Q\to 1$ is an exact sequence of groups, where $H$ and $Q$ satisfy the Baum--Connes conjecture conjecture with coefficients. Suppose $H$ has a finite classifying space, is a good group, and has the factorization property. Suppose $G$ is torsion-free. Then $G$ satisfies the Baum--Connes Conjecture with coefficients.
\end{Theorem}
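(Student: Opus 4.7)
The plan is to carry out the same strategy as the proof of Theorem \ref{Atiyah1}, but with the torsion-free simplification of the Remark and with the Baum--Connes-with-coefficients (BCC) permanence tools assembled by Schick in \cite{sch07} replacing the Linnell--Schick Atiyah inputs.

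First I would perform the standard reduction to the case where $Q$ is a finite $p$-group. Schick's paper \cite{sch07} carries out this d\'evissage for BCC in parallel with the reduction of \cite[Corollary 2.7]{ls07}, so it transfers to the present setting verbatim. Next, using Lemma \ref{collection}, I produce a collection $\mathcal{U}$ of normal subgroups of $H$ with $H/U$ torsion-free elementary amenable and closed under intersection; forming $\mathcal{U}^G$ as in Lemma \ref{nice}, each $G/U^G$ is elementary amenable and each $H/U^G$ is torsion-free elementary amenable.

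The crucial step is to find a single $U^G \in \mathcal{U}^G$ for which $G/U^G$ is torsion-free. Suppose not; then Lemma \ref{split2} produces a subgroup $Q_0 \le Q$ of order $p$ together with a splitting $Q_0 \to \hat G_0$, where $G_0 = \pi^{-1}(Q_0)$. Since $H$ is good and has a finite classifying space, so does $G_0$, and in particular $G_0$ has finite cohomological dimension. Then by goodness,
\[
H^\ast(\hat G_0,\BZ/p\BZ) \;=\; H^\ast(G_0,\BZ/p\BZ),
\]
which vanishes in sufficiently high degree. But the putative splitting $Q_0 \to \hat G_0$ would force $H^\ast(Q_0,\BZ/p\BZ)$ to be a direct summand of $H^\ast(\hat G_0,\BZ/p\BZ)$, contradicting the nonvanishing of $H^\ast(Q_0,\BZ/p\BZ)$ in all degrees. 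This is the Remark's shortcut, available because $G$ is torsion-free, and it bypasses Theorem \ref{annoying} entirely.

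Given such a $U^G$, consider the extension
\[
1 \to U^G \to G \to G/U^G \to 1.
\]
The subgroup $U^G \le H$ inherits BCC from $H$, because BCC passes to closed subgroups; the quotient $G/U^G$ is torsion-free elementary amenable, hence a-T-menable, hence satisfies BCC by Higson--Kasparov. Schick's extension theorem in \cite{sch07} (which rests on Chabert--Echterhoff--Oyono-Oyono-type permanence) then yields BCC for $G$. The main obstacle --- which in Theorem \ref{Atiyah1} consumed most of the work via Theorem \ref{annoying} and Lemma \ref{tfea} --- essentially disappears here because $G$ is assumed torsion-free: we need only that some $G/U^G$ is torsion-free rather than a sharp control on $\lcm$, and the short cohomological contradiction above suffices. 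The remaining verification is purely bookkeeping, ensuring that Schick's BCC-extension machinery slots into the same framework as the Atiyah argument.
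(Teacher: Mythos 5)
Your overall route is the one the paper intends: the paper offers no argument of its own for Theorem \ref{bc5} and simply defers to Schick's proof in \cite{sch07}, which is exactly the package you assemble --- find $N\trianglelefteq G$ with $N\subseteq H$ and $G/N$ torsion-free elementary amenable, note that $N$ satisfies Baum--Connes with coefficients because the conjecture passes to subgroups, that $G/N$ satisfies it by Higson--Kasparov, and conclude by the Chabert--Echterhoff extension permanence (only the trivial finite subgroup of the torsion-free quotient occurs). Your use of the Remark's cohomological shortcut in place of Theorem \ref{annoying} is also correct: $G_0$ is torsion-free, good by Lemma \ref{good1}, and of finite cohomological dimension by Serre's theorem (it need not have a finite classifying space, but finite cohomological dimension is all you use), so a splitting $Q_0\to\hat G_0$ would exhibit the nonvanishing $H^\ast(Q_0,\BZ/p\BZ)$ as a summand of $H^\ast(\hat G_0,\BZ/p\BZ)\cong H^\ast(G_0,\BZ/p\BZ)$, which vanishes in high degrees.

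The genuine gap is your opening reduction. The d\'evissage of \cite[Lemma 2.4, Corollary 2.7]{ls07} that lets one assume $Q$ is a finite $p$-group is specific to the Atiyah conjecture (it rests on induction results over the group von Neumann algebra and $\lcm$ arithmetic); no analogue exists for Baum--Connes with coefficients. Indeed, the conjecture with coefficients is not even known to pass from a finite-index normal subgroup to the ambient group --- that is precisely why \cite{sch07} is nontrivial --- so ``WLOG $Q$ is a finite $p$-group'' cannot be imposed at the level of the Baum--Connes statement, and \cite{sch07} performs no such reduction. What does transfer is the reduction to finite $Q$: by the permanence results used in \cite{sch07} it suffices to prove the conjecture for $\pi^{-1}(E)$ for every finite subgroup $E\le Q$, and each such preimage is a torsion-free finite extension of $H$ satisfying your hypotheses. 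For finite $Q$ of non-prime-power order your splitting step then needs an extra argument, since Lemma \ref{split2} and the claim that torsion in $G/U^G$ is $p$-torsion for a fixed $p$ presuppose a $p$-group. One repair: for each prime $p$ dividing $|Q|$ apply your $p$-group argument to $\pi^{-1}(P)$, $P$ a Sylow $p$-subgroup of $Q$, obtaining $N_p'\trianglelefteq \pi^{-1}(P)$ contained in $H$ with $\pi^{-1}(P)/N_p'$ torsion-free elementary amenable; pass to the $G$-core $N_p=\bigcap_{g\in G}gN_p'g^{-1}$ (a finite intersection as in Lemma \ref{nice}, so $H/N_p$ stays torsion-free elementary amenable), and check that $G/N_p$ has no $p$-torsion, since a $p$-torsion element can be conjugated into $\pi^{-1}(P)/N_p$, dies in $\pi^{-1}(P)/N_p'$, and hence lies in the torsion-free $H/N_p$. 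Setting $N=\bigcap_p N_p$ gives $G/N$ torsion-free elementary amenable, because any torsion element would have prime order $p$ dividing $|Q|$, map trivially to $G/N_p$, and so lie in the torsion-free $H/N$. With this (or an adaptation of Lemmas \ref{split1} and \ref{split2} to a fixed order-$p$ subgroup of an arbitrary finite $Q$), the rest of your argument goes through and coincides with the proof the paper is citing.
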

\end{Remark}

\section{Proof of  Lemma  \ref{annoying2}}

In this section, we complete the proof of Theorem \ref{Atiyah1}.
\begin{proof}[Proof of Lemma  \ref{annoying2}]

We use the same spectral sequence argument as in \cite{ls07}. Recall that we have an Atiyah--Hirzebruch spectral sequence for $\tilde \pi_S^{s+t} (G)$ with $$E_2^{s,t}(G) = \tilde H^s(G, \pi_S^t),$$ and a corresponding spectral sequence for $\tilde \pi_S^{s+t} (\hat G)$ with $$E_2^{s,t}(\hat G) = \tilde H^s(\hat G, \pi_S^t).$$ This second spectral sequence is defined to be the direct limit of the spectral sequences for the finite quotients of $G$. 

Since $G$ has torsion,  our classifying space for $G$ is infinite dimensional, so we need to be careful about the convergence of the Atiyah--Hirzebruch spectral sequence. If $X$ is a connected CW-complex and $X^{(k)}$ its $k$-skeleton, let $F_k^*(X)$ denote the kernel of the map $j^\ast : \tilde \pi_S^*(X) \rightarrow \tilde \pi_S^*(X^{(k)})$ induced by the inclusion $j: X^{(k)} \rightarrow X$. Assume without loss of generality that $X^{(0)}$ is a point; in this case $F_0^\ast(X) =  \tilde \pi_S^*(X)$ as $\tilde \pi_S^\ast$ is trivial. We say the above spectral sequence converges to $\tilde \pi_S^{s+t}(X)$ if $$E_\infty^{s,t}(X) \cong F_s^{s+t}(X)/F_{s+1}^{s+t}(X)\hspace{10mm} \forall s \ge 0,t \in \BZ.$$

Now we compare our two spectral sequences. Recall that $\pi_S^\ast$ is trivial if $\ast > 0$ and finite if $\ast < 0$. Therefore, these are both fourth quadrant spectral sequences with finite coefficients for $t < 0$. 
Since $G$ is good by Lemma \ref{good1}, we get an isomorphism on  $E_2^{s,t}$ terms except for $s > 0, t = 0$ (for $s = t = 0$ both terms are trivial.) This implies isomorphic $E_\infty^{s,t}$ terms for $s+t \le 0$ as in this range there is no interaction with the possibly non-isomorphic terms, as indicated below in Figure 1.

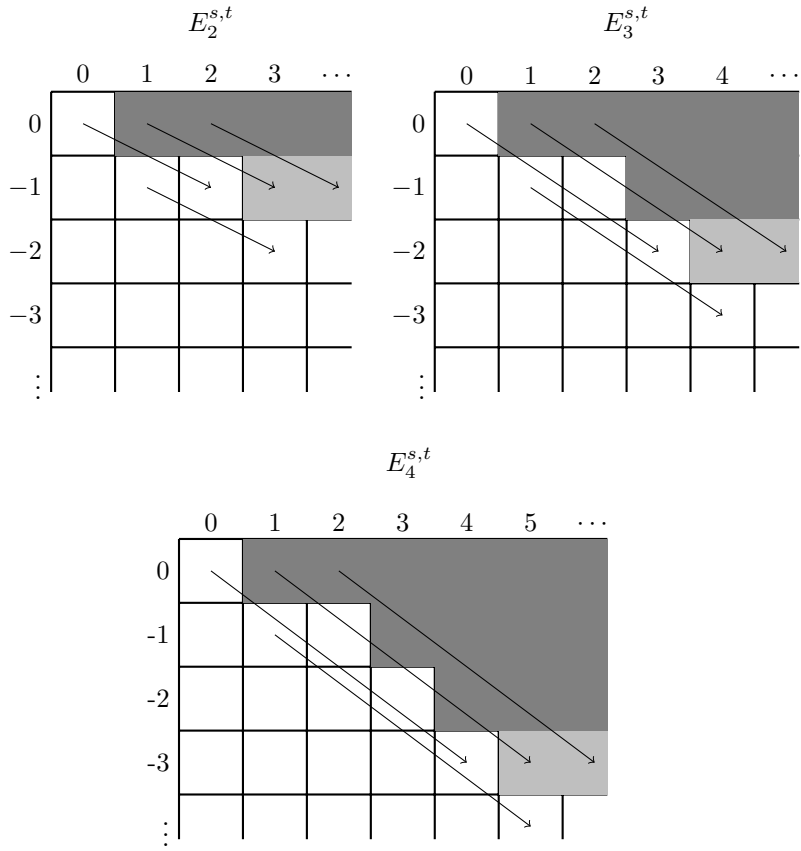
\begin{figure}[ht]
\centering
\begin{tikzpicture}[scale = .85]

\begin{scope}[xshift=2cm, yshift=-2cm]
 \foreach \i in {\xMin,...,\xMax} {
        \node [black,above] at (\i+0.5,\yMin) {$\i$};
    }
     \node [black,above] at (\xMax +1.5,\yMin) {$\cdots$};

  \foreach \i in {\yMin,...,\yMax} {
         \node [black,left] at (\xMin,\i-0.5) {$\i$};
    }
 \node [black,left] at (\xMin,\yMax-1.5) {$\vdots$};    

 \draw [step=1.0,black, thick, yshift=\yMax cm - 2cm] (0,.3) grid +(\xMax+1.7, -\yMax+1.7);

 \path [fill =  gray] (1,0) rectangle (4.7,-1);
 \path [fill =  lightgray] (3,-1) rectangle (4.7,-2);

\node [black,above] at (2.5,.7) {$E_2^{s,t}$};
\node [black,above] at (9,.7) {$E_3^{s,t}$};
\draw[<-]  (2.5,-1.5) -- (.5,-.5);
\draw[<-]  (3.5,-2.5) -- (1.5,-1.5);
\draw[<-]  (4.5,-1.5) -- (2.5,-.5);
\draw[<-]  (3.5,-1.5) -- (1.5,-.5);

\foreach \i [evaluate=\i as \j using \i - 6] in {\zMin,...,\zMax} {
         \node [black,left] at (\i +.75,\yMin+.25) {\pgfmathtruncatemacro{\yet}{\j}\yet};
  
    }

  \foreach \i in {\yMin,...,\yMax} {
         \node [black,left] at (\zMin,\i-0.5) {$\i$};
    }
  \node [black,above] at (\zMax +1.5,\yMin) {$\cdots$};   
 \node [black,left] at (\zMin,\yMax-1.5) {$\vdots$};    

 \draw [step=1.0,black, thick, yshift=\yMax cm - 2cm] (6,.3) grid +(5.7, -\yMax+1.7);

 \path [fill =  gray] (7,0) rectangle (11.7,-1);
 \path [fill =  gray] (9,-1) rectangle (11.7,-2);
 \path [fill =  lightgray] (10,-2) rectangle (11.7,-3);

\draw[<-]  (9.5,-2.5) -- (6.5,-0.5);
\draw[<-]  (10.5,-2.5) -- (7.5,-0.5);
\draw[<-]  (10.5,-3.5) -- (7.5,-1.5);
\draw[<-]  (11.5,-2.5) -- (8.5,-.5);

\foreach \i [evaluate=\i as \j using \i - 2] in {\sMin,...,\sMax} {
         \node [black,left] at (\i +.75,\tMin+.25) {\pgfmathtruncatemacro{\yet}{\j}\yet};
  
    }

\node [black,above] at (5.6,-6.2) {$E_4^{s,t}$};

  \foreach \i [evaluate=\i as \j using \i + 7]  in {\tMin,...,\tMax} {
         \node [black,left] at (\sMin,\i -.5) {\pgfmathtruncatemacro{\yet}{\j}\yet};
    }
  \node [black,above] at (\sMax + 1.5,\tMin) {$\cdots$};   
 \node [black,left] at (\sMin,\tMax- 1.5) {$\vdots$};    

 \draw [step=1.0,black, thick, yshift=\yMax cm - 2cm] (2,-6.7) grid +(6.7, -\yMax+1.7);

\path [fill =  gray] (3,-7) rectangle (8.7,-8);
\path [fill =  gray] (5,-8) rectangle (8.7,-9);
\path [fill =  gray] (6,-9) rectangle (8.7,-10);
\path [fill =  lightgray] (7,-10) rectangle (8.7,-11);

\draw[<-]  (6.5,-10.5) -- (2.5,-7.5);
\draw[<-]  (7.5,-11.5) -- (3.5,-8.5);

\draw[<-]  (7.5,-10.5) -- (3.5,-7.5);
\draw[<-]  (8.5,-10.5) -- (4.5,-7.5);
\end{scope}

 \end{tikzpicture}
\caption{The shaded regions indicates terms that interact with the non-isomorphic top row $(s > 0)$. These terms have no effect on the lower diagonal of the $E_\infty$ sheet. }
\end{figure}

In particular, we have an isomorphism on the diagonal $E_\infty^{k,-k}(\hat G) \rightarrow E_\infty^{k,-k}(G)$ for all $k$.  We now use the convergence results of \cite{ls07}, where it is shown in Propositions 4.43 and 4.47 that $F_{k}^0(\hat G)/F_{k+1}^0(\hat G)$ injects into $E_\infty^{k,-k}(\hat G)$ and $F_{k}^0(G)/F_{k+1}^0(G)$ is isomorphic to $E_\infty^{k,-k}(G)$. 

(The key idea behind these  results is that the $E_2$-terms of both spectral sequences are finite; for $E_2^{s,t}(G)$ this is trivial, while for $E_2^{s,t}( \hat G)$ it follows from $G$ being good. Since these terms are finite, they must stabilize after a finite number of sheets of the spectral sequence, and with some care convergence follows.)

The convergence results imply an injection: $$F_{k}^0(\hat G)/F_{k+1}^0(\hat G) \rightarrow  E_\infty^{k,-k}(\hat G) \cong E_\infty^{k,-k}( G) \rightarrow F_{k}^0(G)/F_{k+1}^0(G).$$ We now have a commutative diagram for all $k$: 

$$\begin{tikzcd}[row sep=small, column sep=small]
0  \ar{r} & F_{k}^0(\hat G)/F_{k+1}^0(\hat G) \ar{r} \ar{d}{i_k} & \tilde \pi_S^0(\hat G)/F_{k+1}^0(\hat G) \ar{r} \ar{d}{\Phi_{k+1}} & \tilde \pi_S^0(\hat G)/F_k^0(\hat G) \ar{r}  \ar{d}{\Phi_k} & 0\\
0  \ar{r} &  F_{k}^0(G)/F_{k+1}^0( G) \ar{r}  & \tilde \pi_S^0(G)/F_{k+1}^0(G) \ar{r} &             \tilde \pi_S^0(G)/F_k^0(G) \ar{r} & 0\\
\end{tikzcd}$$

The injectivity of $\Phi_{k+1}$ follows from the injectivity of $i_k$ and $\Phi_k$ by a diagram chase. By induction beginning with $k = 0$, we have injections for each $k \ge 0$: $$\tilde \pi_S^0(\hat G)/F_k^0(\hat G) \rightarrow \tilde \pi_S^0(G)/F_k^0(G).$$

We also have the following commutative diagram, where $s$ is by assumption split-injective:

$$\begin{tikzcd}[row sep=scriptsize, column sep=scriptsize]
\tilde \pi_S^0(Q) \ar{rr} \ar{d}{s}&& \tilde \pi_S^0(Q)/F_k^0(Q) \ar{d}{s}\\
\tilde \pi_S^0(\hat G) \ar{d} \ar{rr} && \tilde \pi_S^0(\hat G)/F_k^0(\hat G)\ar{d}{\Phi_k} \\
\tilde \pi_S^0(G) \ar{rr} && \tilde \pi_S^0(G)/F_k^0(G)\\
\end{tikzcd}$$

We have shown that $\Phi_k$ is injective for each $k \ge 0$. This implies any element of the kernel of $\tilde \pi_S^0(Q) \rightarrow \tilde \pi_S^0(G)$  would be contained in $F^0_k(Q)$ for each $k \in \mathbb{N}$.
However, it was shown in Proposition 4.40 of \cite{ls07} that $\cap_k \hspace{1mm} F^0_k(Q)$ is trivial, which implies $\tilde \pi_S^0(Q) \rightarrow \tilde \pi_S^0(G)$ is injective.
\end{proof}

\section{Virtually cocompact special Groups}
A \emph{cube complex} $K$ is a polyhedral complex where each cell is isometric to the Euclidean cube $[0,1]^n$. A cube complex is \emph{non-positively curved} if the link of each vertex is a flag simplicial complex. If a cube complex is non-positively curved and simply-connected, then the natural piecewise Euclidean metric on $K$ is CAT(0).

If $K$ is a non-positively curved cube complex, Haglund and Wise in \cite{hw} made the remarkable observation that $\pi_1(K)$ injects into a right-angled Artin group $A_\Gamma$ if and only if the hyperplanes of $K$ are embedded in $K$ and avoid certain configurations. They define such a cube complex to be \emph{special}.  A group $H$ is then \emph{special} if is the fundamental group of a special cube complex $K$, and \emph{cocompact special} if $K$ is compact. 

\begin{Theorem} [{\cite[Theorem  2]{los12}}]  Right-angled Artin groups satisfy the Strong Atiyah conjecture. Consequently, special groups satisfy the Strong Atiyah conjecture.
\end{Theorem}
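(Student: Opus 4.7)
The plan is to reduce the theorem to Schick's result \cite{s00}, which establishes the Strong Atiyah conjecture for residually torsion-free elementary amenable groups. Concretely, I would first show that every right-angled Artin group $A_\Gamma$ is residually torsion-free nilpotent. This is classical (going back to Duchamp--Krob): one analyzes the lower central series $\{\gamma_k(A_\Gamma)\}$ and checks that each quotient $A_\Gamma / \gamma_k(A_\Gamma)$ is finitely generated torsion-free nilpotent and that the intersection $\bigcap_k \gamma_k(A_\Gamma)$ is trivial. The cleanest route is a Magnus-type embedding of $A_\Gamma$ into the group of units of the completed graded $\mathbb{Z}$-algebra freely generated by the vertices of $\Gamma$ subject to the commutation relations encoded by $\Gamma$; truncating this embedding at each degree recovers the claim. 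Since torsion-free nilpotent groups are solvable and hence elementary amenable, $A_\Gamma$ is residually torsion-free elementary amenable, and Schick's theorem then yields the Strong Atiyah conjecture for $A_\Gamma$.

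For the second half of the statement, I would invoke the defining theorem of Haglund--Wise \cite{hw}: any special group $H = \pi_1(K)$ embeds into a right-angled Artin group $A_\Gamma$. Since the residual torsion-free nilpotence established above forces $A_\Gamma$ to be torsion-free, and since the Strong Atiyah conjecture for a torsion-free group passes to all of its subgroups (as noted in the introduction of the paper), $H$ satisfies the Strong Atiyah conjecture.

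The main obstacle is really absorbed into Schick's theorem, whose proof uses delicate approximation techniques to bound the $\ell^2$-dimension by integer-valued dimensions over torsion-free elementary amenable quotients. What one must genuinely establish along the way is the residual torsion-free nilpotence of right-angled Artin groups; this algebraic input, while classical, requires some care if one writes out the Magnus-type embedding explicitly, and it is the only nontrivial step not black-boxed by the existing literature. As a sanity check, note that any finite-dimensional $K(A_\Gamma,1)$ is the Salvetti complex, which shows $A_\Gamma$ is torsion-free independently of the Magnus argument, and gives a useful cross-check that the Haglund--Wise embedding lands in a group to which the torsion-free-subgroup principle applies.
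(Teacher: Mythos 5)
This theorem is not proved in the paper at all --- it is imported verbatim from \cite{los12} --- so your proposal has to be measured against the cited source rather than an in-text argument. Your route is correct for the conjecture as it is formulated in this paper (matrices over $\BZ G$, equivalently $\mathbb{Q}G$): right-angled Artin groups are residually torsion-free nilpotent by Duchamp--Krob, torsion-free nilpotent groups are elementary amenable, so Schick's theorem \cite{s00} applies; and since $A_\gG$ is torsion-free, the Haglund--Wise local isometry \cite{hw} plus the fact (noted in the introduction) that the Strong Atiyah conjecture for a torsion-free group passes to all subgroups handles special groups. What this buys is brevity; what it does not buy is the actual strength of \cite{los12}, whose point is to treat coefficient fields larger than $\mathbb{Q}$ (the paper explicitly mentions the generalization to subfields $K\subseteq\mathbb{C}$) and right-angled Coxeter groups, where the approximation argument behind \cite{s00} is unavailable and one instead runs the Linnell--Schick extension machinery \cite{ls07} using cohomological completeness and enough torsion-free quotients of $A_\gG$. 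So your argument recovers the statement as printed here, but not the full theorem being cited. Two small repairs: first, for special groups that are not cocompact the hyperplane graph may be infinite, so the embedding lands in an infinitely generated right-angled Artin group; this is harmless because any matrix over the subgroup involves finitely many generators, hence lives over the sub-Artin group on a finite full subgraph, and von Neumann dimension is preserved under induction (alternatively, restrict to the cocompact case, which is all this paper uses). Second, your sanity check is stated backwards: the Salvetti complex \emph{is} a finite $K(A_\gG,1)$, which gives torsion-freeness; it is not true that every finite-dimensional classifying space is the Salvetti complex --- though nothing in your argument depends on this, since torsion-freeness already follows from residual torsion-free nilpotence.
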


The next theorem is an immediate consequence of \cite[Proposition 6.5]{hw}, which inspired much of the work in this paper.
\begin{Theorem}\label{retract}
Let $H$ be a cocompact special group, and $i: H \rightarrow A_\gG$ the embedding constructed by Haglund and Wise. Then there is a finite index subgroup $K$ of $A_\gG$ which retracts onto $H$.
\end{Theorem}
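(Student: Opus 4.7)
The plan is to derive this directly from the canonical completion and retraction construction of Haglund--Wise. Recall that since $H$ is cocompact special, $H = \pi_1(X)$ for some compact non-positively curved special cube complex $X$. Specialness is defined precisely so that there is a combinatorial local isometry $\phi: X \to R_\gG$ into the Salvetti complex $R_\gG$ of a right-angled Artin group $A_\gG$, and the map $i: H \hookrightarrow A_\gG$ is the induced injection $\phi_*$ on fundamental groups.

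Next I would invoke Proposition~6.5 of \cite{hw} (the canonical completion and retraction), which takes as input a local isometry $Y \to X$ of compact cube complexes with $X$ a Salvetti complex (or more generally virtually special with sufficient structure), and produces a finite cover $\hat X \to X$ together with a lift $Y \hookrightarrow \hat X$ as a locally convex subcomplex and a combinatorial retraction $r: \hat X \to Y$ of cube complexes. Applied to $\phi: X \to R_\gG$, this yields a finite cover $\hat R \to R_\gG$, corresponding to a finite index subgroup $K := \pi_1(\hat R) \le A_\gG$, a lift of $X$ into $\hat R$ whose inclusion realizes $i$ up to the covering, and a retraction $r: \hat R \to X$.

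Passing to fundamental groups, $r_*: K \to H$ is a surjection with $r_* \circ i = \mathrm{id}_H$, so $K$ retracts onto $H$. Since $\hat R \to R_\gG$ is a finite cover of a compact complex, $K$ has finite index in $A_\gG$, completing the proof.

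The only real content beyond citation is checking that the two maps $i$ and the inclusion $H = \pi_1(X) \hookrightarrow \pi_1(\hat R) = K$ followed by $K \hookrightarrow A_\gG$ agree. This is immediate from the fact that the lift of $X$ into $\hat R$ covers $\phi$, so this is not a genuine obstacle; the main substantive step is really the appeal to the canonical completion construction itself, which is exactly what \cite[Proposition 6.5]{hw} supplies.
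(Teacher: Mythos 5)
Your proposal is correct and is essentially the paper's argument: the paper also derives the statement directly from the canonical completion and retraction of \cite[Proposition 6.5]{hw}, and your write-up simply spells out the same mechanism (local isometry of the special complex into the Salvetti complex, finite cover $\hat R$ with $K = \pi_1(\hat R)$, and the induced retraction $r_*$ with $r_* \circ i = \mathrm{id}_H$) in more detail.
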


Obviously, cocompact special groups have finite classifying spaces. Since goodness and the factorization property pass to finite index subgroups and retracts (Lemmas \ref{good2}, \ref{good3}, \ref{l:fin}, \ref{I:ret}) and right-angled Artin groups are good and have the factorization property (Theorem \ref{good} and Corollary \ref{I:factor}), we conclude:

\begin{Corollary}\label{gf}
Cocompact special groups have finite classifying spaces,  are good, and have the factorization property.
\end{Corollary}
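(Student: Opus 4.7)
The plan is to assemble the corollary from the structural results of the previous sections together with the Haglund--Wise retraction theorem just stated. Let $H$ be a cocompact special group, so by definition $H = \pi_1(K)$ for some compact non-positively curved special cube complex $K$. The universal cover $\widetilde K$ is CAT(0), hence contractible, so $K$ is a finite classifying space for $H$; this handles the first assertion with no extra work.

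For the remaining two properties, the strategy is to transfer them from the ambient right-angled Artin group through the finite-index retract supplied by Theorem \ref{retract}. First I would invoke Theorem \ref{retract} to obtain an embedding $i : H \hookrightarrow A_\Gamma$ together with a finite-index subgroup $K \le A_\Gamma$ and a retraction $r : K \to H$ (i.e.\ $r \circ i|_H = \mathrm{id}_H$ with $i(H) \subseteq K$). Next I would apply Theorem \ref{good} and Corollary \ref{I:factor} to conclude that $A_\Gamma$ is good and has the factorization property. These two properties are preserved under passage to finite-index subgroups by Lemmas \ref{good2} and \ref{l:fin}, so $K$ inherits both. Finally, since $H$ is a retract of $K$, Lemmas \ref{good3} and \ref{I:ret} transfer goodness and the factorization property from $K$ down to $H$, completing the argument.

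There is no real obstacle here: every ingredient has already been prepared. The only thing worth double-checking is the chain of inheritance (RAAG $\Rightarrow$ finite-index subgroup $\Rightarrow$ retract), which is exactly the pairing of lemmas needed to accommodate the fact that the Haglund--Wise retraction goes through a finite-index subgroup of $A_\Gamma$ rather than $A_\Gamma$ itself. Once this two-step transfer is written out for each of the two properties in parallel, the corollary follows.
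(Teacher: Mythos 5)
Your proposal is correct and follows exactly the paper's argument: the compact special cube complex (with CAT(0), hence contractible, universal cover) gives the finite classifying space, and the two-step transfer via Theorem \ref{retract} --- goodness and the factorization property pass from $A_\Gamma$ (Theorem \ref{good}, Corollary \ref{I:factor}) to the finite-index subgroup (Lemmas \ref{good2}, \ref{l:fin}) and then to the retract $H$ (Lemmas \ref{good3}, \ref{I:ret}) --- is precisely how the paper concludes.
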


We can now prove our second theorem from the introduction, which gives a new class of groups satisfying the Strong Atiyah conjecture.

\begin{proof}[Proof of Theorem \ref{vcs1}]

Let $G$ be a virtually cocompact special group. As finite index subgroups of cocompact special groups are cocompact special \cite{hw}, we can assume that $G$ has a normal cocompact special subgroup $H$. By Corollary \ref{gf}, Theorem \ref{Atiyah1} applies to $H$. 
\end{proof}

\begin{Remark}
The class of virtually cocompact special groups has been shown to be amazingly large, highlighted by  Agol's recent proof of the Virtually Compact Special Theorem. 

\begin{Theorem}[{\cite[Theorem  1.1]{a12}}]  Let $G$ be a word-hyperbolic group that acts properly and compactly on a $CAT(0)$ cube complex $X$. Then there exists a finite index subgroup $G'$ of $G$ such that $X/G'$ is special.
\end{Theorem}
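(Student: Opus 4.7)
The plan is to produce a finite-index $G' \le G$ such that the quotient $X/G'$ is a special cube complex in the sense of Haglund--Wise, that is, its hyperplanes are two-sided, do not self-intersect, do not self-osculate, and do not inter-osculate. Since hyperplanes in $X$ already satisfy a convexity condition (they are themselves $CAT(0)$ cube complexes), the strategy is to pass to finite covers that successively eliminate each of the four pathologies at the level of the quotient $Y = X/G$. Two-sidedness is easy: a single double cover of $Y$ orients all hyperplanes. Self-intersection and self-osculation of a hyperplane $H$ in $Y$ correspond to elements of $G$ taking one side of the $G$-translate $\tilde H \subset X$ to the other or to a nearby parallel copy; by residual finiteness of the hyperplane stabilizers (which are themselves quasiconvex subgroups of the hyperbolic group $G$), one expects these to be killed by a deep enough finite cover, provided one has strong enough separability properties.

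The crux of the argument is the inter-osculation pathology, and this is where Wise's \emph{Malnormal Special Quotient Theorem} (MSQT) is the essential tool. The MSQT says that if $H$ is a virtually compact special word-hyperbolic group and $\{P_1,\dots,P_n\}$ is an almost malnormal family of quasiconvex subgroups, then there are finite-index subgroups $P_i' \le P_i$ such that the quotient $H/\langle\langle P_1',\dots,P_n'\rangle\rangle$ is again word-hyperbolic and virtually compact special. The plan is to set up an induction on a complexity measuring the failure of specialness, and at each step use MSQT applied to stabilizers of hyperplane-carriers to produce a new hyperbolic cubulated group with strictly smaller complexity while controlling separability of the relevant subgroups in finite covers.

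To actually organize the induction I would use Agol's \emph{coloring} technique: one assigns colors to the $G$-orbits of hyperplanes of $X$ and seeks a finite-index subgroup $G' \le G$ together with a coloring of the hyperplanes of $X/G'$ with finitely many colors so that (i) at every vertex of $X/G'$ the local hyperplanes are differently colored and (ii) hyperplanes of the same color are pairwise disjoint and non-osculating. Property (i) together with some bookkeeping forces the four bad configurations to disappear, so the coloring certifies specialness. Existence of such a coloring is proved by combining separability of quasiconvex subgroups (a consequence, in the virtually special setting, of Haglund--Wise) with the MSQT to ensure that the relevant quasiconvex subgroups of $G$ are separable in the appropriate quotient and that the passage to the quotient can be realized as a cubical cover rather than just a group-theoretic one.

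The main obstacle, and the deepest part of Agol's theorem, is showing that the inductive step using MSQT can be carried out purely within the class of cubulated hyperbolic groups and that the resulting ``colorable'' finite cover exists; establishing that the quotient produced by MSQT continues to act geometrically on a $CAT(0)$ cube complex (so that the induction can continue) requires Wise's theory of cubical small cancellation and his cubulation of relatively hyperbolic groups. Once the coloring is produced, verification that the corresponding finite cover $X/G'$ is special is then a direct check against the four Haglund--Wise conditions.
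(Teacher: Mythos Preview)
The paper does not prove this statement. It is quoted, inside a Remark in Section~6, as Agol's Virtually Compact Special Theorem \cite[Theorem~1.1]{a12}, solely to advertise that the class of virtually cocompact special groups is large; no argument is given or even sketched. So there is no ``paper's own proof'' against which to compare your proposal.

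For what it is worth, your outline is a reasonable high-level description of the actual Agol--Groves--Manning argument: the reduction of specialness to a coloring problem on hyperplanes, the use of Wise's Malnormal Special Quotient Theorem, and the need to stay within the cubulated hyperbolic world at each step are indeed the main ingredients. But since the paper under review contributes nothing toward this theorem and treats it as a black box, your write-up is not comparable to anything in the paper, and in the context of this paper the correct ``proof'' is simply a citation.
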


Recent breakthroughs in $3$-manifold theory and Theorem \ref{vcs1} imply that the Strong Atiyah conjecture holds for all fundamental groups of closed (or with toroidal boundary) hyperbolic 3-manifolds.
 \end{Remark}

Coxeter groups were cubulated in \cite{nr02}, and shown to be virtually special in \cite{hw1}. It is also known that the cubulation is cocompact whenever the group does not contain a Euclidean triangle Coxeter subgroup. 

\begin{Corollary}
 The Strong Atiyah conjecture holds for all Coxeter groups which do not contain a Euclidean triangle Coxeter subgroup. In particular, it holds for all word-hyperbolic Coxeter groups.
 \end{Corollary}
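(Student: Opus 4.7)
The plan is to combine the three ingredients mentioned in the paragraph preceding the corollary with Theorem \ref{vcs1}. First I would invoke the Niblo--Reeves cubulation \cite{nr02}: any Coxeter group $W$ acts properly on a CAT(0) cube complex $X$ built from the halfspaces of its Davis complex. Then by \cite{hw1}, this action descends to a virtually special structure, i.e.\ there is a finite index subgroup $W' \le W$ so that $X/W'$ is a special cube complex.

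The next step is to upgrade properness to cocompactness under the Euclidean triangle hypothesis. This is the content of the folklore fact stated in the paragraph: the Niblo--Reeves action is cocompact precisely when $W$ has no Euclidean triangle Coxeter subgroup (the obstruction to cocompactness being an infinite ``rank of hyperplanes'' that can only occur in the presence of such a reflection subgroup). Granting this, $X/W'$ is both compact and special, so $W'$ is cocompact special, and therefore $W$ is virtually cocompact special. Theorem \ref{vcs1} then applies to conclude the Strong Atiyah conjecture for $W$.

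For the second assertion, I would observe that a Euclidean triangle Coxeter group contains a subgroup isomorphic to $\BZ^2$ (its translation subgroup). Since word-hyperbolic groups cannot contain $\BZ^2$ as a subgroup, no word-hyperbolic Coxeter group can contain a Euclidean triangle Coxeter subgroup, and the first part applies.

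The only step requiring more than a citation is the cocompactness claim in the second paragraph; the main potential obstacle is to locate (or supply) a clean reference for the precise statement that the Niblo--Reeves cube complex is cocompact exactly when there is no Euclidean triangle reflection subgroup, but this is standard and follows from analyzing which walls of the Davis complex bound infinitely many parallelism classes. Everything else reduces to an application of the theorems already established in the paper.
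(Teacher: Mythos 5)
Your proposal is correct and follows the same route the paper intends: the Niblo--Reeves cubulation, virtual specialness from Haglund--Wise, cocompactness of the cubulation in the absence of Euclidean triangle Coxeter subgroups, and then Theorem \ref{vcs1}, with the word-hyperbolic case handled by the absence of $\BZ^2$ subgroups. The paper treats this as immediate from the preceding paragraph, so your write-up simply supplies the same chain of citations in more detail.
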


\section{Knot and Link Complements}

In this section we give an example from \cite{br12}  which illustrates the advantage of our conditions over those of \cite{ls07}.
 Recall that a \emph{link group} $G$ is the fundamental group of the complement $M$ of a tamely embedded link in $S^3$. Similarly, a \emph{knot group} is the fundamental group of the complement of a tame knot in $S^3$. In \cite{ls07}, it was shown that all knot groups are cohomologically complete. The idea is that the cohomology of knot groups $G$ is well-known by Alexander duality: $$H^n(G, \BZ_p) = \begin{cases}
\BZ_p, & n = 0,1 \\
0, & n \ge 2
\end{cases}
$$

Therefore, the map to the abelianization $G \rightarrow G/[G,G] \cong \BZ$ induces an isomorphism on all homology groups. We now use a theorem of Stallings in \cite{st65} to conclude that for any prime $p$, the pro-$p$ completion $\hat G^p \cong \hat {\BZ}^p$, which implies completeness of $G$.

\begin{Remark}
Wilton and Zalesskii prove in \cite{wz09} that if $M$ is a closed, irreducible, orientable 3-manifold, goodness of $\pi_1(M)$ follows from goodness of all fundamental groups of pieces of the JSJ decomposition of $M$. We have seen that fundamental groups of closed hyperbolic $3$-manifolds, or those with toroidal boundary, are good. Since fundamental groups of Seifert-fibered spaces are good, it follows that all knot groups are good.
\end{Remark}

Blomer, Linnell and Schick in \cite{bls08} also show cohomological completeness for certain link complements called \emph{primitive link groups} (this is a combinatorial condition on the linking diagram.) This lets them conclude:

\begin{Theorem}[{\cite[Theorem  1.4]{bls08}}] \label{bls} 
Let $H$ be a knot group or a primitive link group. If $H$ satisfies the Strong Atiyah conjecture, then every elementary amenable extension of $H$ satisfies the Strong Atiyah conjecture.
\end{Theorem}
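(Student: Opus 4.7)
The plan is to invoke the Linnell-Schick lifting theorem of \cite{ls07}, which upgrades the Strong Atiyah conjecture from a torsion-free base group $H$ to every elementary amenable extension of $H$ provided $H$ has a finite classifying space, is cohomologically complete, and has enough torsion-free quotients. So it suffices to verify these three conditions for knot groups and for primitive link groups; the lifting theorem, combined with the standing hypothesis that $H$ satisfies the Strong Atiyah conjecture, then yields the conclusion.

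The finite classifying space condition is immediate: the complement $M$ is an aspherical $3$-manifold with non-empty boundary, so it deformation retracts onto a finite $2$-complex serving as a finite $K(H,1)$. Cohomological completeness for knot groups is essentially the content of the preceding discussion: Alexander duality gives $H^*(G,\BZ/p\BZ) \cong H^*(\BZ,\BZ/p\BZ)$ for every prime $p$, and Stallings' theorem \cite{st65} then promotes the abelianization map to an isomorphism $\hat G^p \cong \hat \BZ^p$ of pro-$p$ completions, which is plainly cohomologically $p$-complete. For an $n$-component primitive link group the abelianization is $\BZ^n$, and the combinatorial primitivity condition on the linking diagram is designed precisely so that the analogous Alexander-duality calculation identifies $H^*(G,\BZ/p\BZ)$ with $H^*(\BZ^n,\BZ/p\BZ)$ in every degree; Stallings again identifies the pro-$p$ completion of $G$ with that of $\BZ^n$. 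Enough torsion-free quotients then falls out: any homomorphism from $G$ to a finite $p$-group extends to the pro-$p$ completion, which is now a pro-$p$ abelian group, so the image is a finite abelian $p$-group and the map factors through the torsion-free elementary amenable group $\BZ^n$.

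The main obstacle is the cohomological computation for primitive link groups. Unlike the knot case, where the complement simply has the mod-$p$ cohomology of a circle, a multi-component link complement has richer cohomology in degree $2$, and one must exploit the combinatorial primitivity hypothesis on the linking diagram to control these higher-degree classes and verify that the abelianization map actually induces an isomorphism on mod-$p$ cohomology for every prime $p$. Once this verification, carried out explicitly in \cite{bls08}, is in hand, the remainder of the argument is uniform between the knot and primitive link cases and the conclusion follows directly from the Linnell-Schick lifting machinery.
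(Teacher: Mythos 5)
The paper does not actually prove this statement: it is quoted verbatim as Theorem 1.4 of \cite{bls08}, and the surrounding text only sketches why knot groups are cohomologically complete. Your overall plan --- verify the hypotheses of the Linnell--Schick lifting theorem of \cite{ls07} (finite classifying space, cohomological completeness, enough torsion-free quotients) for knot groups and for primitive link groups --- is indeed the route taken in \cite{bls08}, and your knot-group argument (Alexander duality plus Stallings' theorem \cite{st65} applied to the abelianization $G\to\BZ$) matches the paper's own discussion.

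Your account of the primitive link case, however, is wrong, and the error propagates into your verification of enough torsion-free quotients. For an $n$-component link exterior $M$, Alexander duality gives $H_2(M)\cong\BZ^{\,n-1}$, and the classifying map $M\to BG$ is surjective on $H_2$, so $\dim_{\BZ/p}H^2(G;\BZ/p)\le n-1$; on the other hand $\dim_{\BZ/p}H^2(\BZ^n;\BZ/p)=n(n-1)/2$. Hence for $n\ge 3$ the abelianization $G\to\BZ^n$ cannot induce an isomorphism on mod-$p$ cohomology, and (since \cite{bls08} do prove cohomological completeness) the pro-$p$ completion of a primitive link group with at least three components cannot be the abelian group $(\hat\BZ^p)^n$; even for $n=2$ the map on $H_2$ is multiplication by the linking number, so the claim already fails unless that number is $\pm 1$. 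This is exactly why the primitive-link analysis in \cite{bls08} is genuinely harder than the knot case: the completions are nonabelian, and the primitivity condition is used to run an inductive argument, not to compare $G$ with $\BZ^n$. Consequently your final step --- ``the image in the pro-$p$ completion is abelian, so every map to a finite $p$-group factors through $\BZ^n$'' --- fails, and the enough-torsion-free-quotients property must instead be extracted from the actual torsion-free nilpotent (more generally elementary amenable) quotients produced in \cite{bls08}. Since the statement under review \emph{is} \cite[Theorem 1.4]{bls08}, deferring the key verification to that paper is unavoidable and consistent with how the present paper treats it, but the specific mechanism you supply for the link case is not correct as written.
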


On the other hand, Bridson and Reid in \cite{br12} reverse the argument in \cite{ls07} to construct link groups that are not cohomologically complete. These examples are \emph{homology boundary links}, which have the property that the corresponding link group $G$ surjects onto $F_2$. An example of theirs is shown below.
\\

\begin{center}
\tikzset{knot/.style={white,double distance=3pt,line width=5pt, double=black}}
\begin{tikzpicture}[thick, scale = 0.5]

\draw[knot] (-4,0) to[out=-90,in=0] (-6,-3);
\draw[knot] (2,0) to[out=-90,in=180] (4,-3);

\draw[knot] (3,0) arc (0:360:4 and 1);

\draw[knot] (-6,3) to[out=180,in=90]  (-7,2) to[out=-90,in=180] (-1,-3) to[out=0,in=-90] (5,2)  to[out=90,in=0] (4,3) ;
\draw[knot] (-6,-3) to[out=180,in=-90]  (-7,-2) to[out=90,in=180] (-1,3) to[out=0,in=90] (5,-2) to[out=-90,in=0] (4,-3);

\draw[knot] (-4,0) to[out=90,in=0] (-6,3);
\draw[knot] (2,0) to[out=90,in=180] (4,3);

\end{tikzpicture}

\end{center}

Again using Alexander duality and Stallings Theorem, Bridson and Reid show that for any prime $p$, $\hat G^p \cong \hat F_2^p$, which has homology concentrated in dimension 1 as $F_2$ is cohomologically complete. However, link groups with more than 2 components have nontrivial second homology groups, which contradicts completeness in this case. 

Clearly, Theorem \ref{bls} does not apply to this homology boundary link example. However, the example shown in the figure (and in general `most'  link complements) are hyperbolic. We have shown above that the link group is good and has the factorization property, so Theorem \ref{Atiyah1} applies and the Atiyah conjecture holds for elementary amenable extensions of this group.

\begin{bibdiv}
\begin{biblist}

\bib{MO80657}{misc}{    
 title={Finite index subgroups of free groups and torsion-free amenable quotients of free groups},    
author={Agol, Ian},   
 note={\url{http://mathoverflow.net/questions/80657} (version: 2011-11-11)},    
eprint={\url{http://mathoverflow.net/questions/80657}},    
organization={MathOverflow},  
}

\bib{a12}{article}{
      author={Agol, Ian},
      author={Groves, Daniel},
      author={Manning, Jason},
       title={The virtual Haken conjecture},
       date = {2012},
         note={arXiv:1204.2810v1}
}

	\bib{a76}{article}{
   author={Atiyah, M. F.},
   title={Elliptic Operators, Discrete Groups and von Neumann Algebras},
   conference={
      title={Colloque ``Analyse et Topologie'' en l'Honneur de Henri Cartan
      (Orsay, 1974)},
   },
   book={
      publisher={Soc. Math. France},
      place={Paris},
   },
   date={1976},
   pages={43--72. Ast\'erisque, No. 32-33},
   review={\MR{0420729 (54 \#8741)}},
}
\bib{a09}{article}{
      author={Austin, Tim},
       title={Rational group ring elements with kernels having irrational
  dimension},
 note={arXiv.org:0909.2360v2},
 date = {2009},
}

\bib{bls08}{article}{
      author={Blomer, Inga},
         author={Linnell, Peter},
      author = {Schick, Thomas},
       journal = {Proc. Amer. Math. Soc.},
      volume = {136},
      number = {10},
      pages = {3449--3459},
       title={Galois cohomology of completed link groups},
        date={2008},

}

\bib{br12}{article}{
      author={Bridson, Martin},
      author = {Reid, Alan},
       title={Nilpotent completions of groups, Grothendieck pairs, and four problems of Baumslag},
 note={arXiv.org:1211.0493v1},
 date = {2012},
}

		\bib{Farkas}{article}{ 
		author={Farkas,Daniel}, 
		 title={Miscellany on Bieberbach group algebras}, 
		 journal={Pacific Journal of Mathematics}, 
		 volume={59}, 
		 date={1975}, 
		 number={2}, 
		 pages={427--435 (electronic)},
		 
		 }

\bib{g10}{article}{
      author={Grabowski, {\L}ukasz},
       title={On the {A}tiyah problem for the lamplighter groups},
 note={arXiv.org:1009.0229},
 date = {2010},
}

\bib{gjz07}{article}{  
author = {Grunewald, F.}, 
author = {Jaikan-Zapirain, A.},
author = {Zalesskii,P. A.}, 
title = {Cohomological goodness and the profinite completion of Bianchi groups}, 
journal = {Duke Math. J.},
volume = {144},
number = {1},
pages = {53--72},
date = {2008},

}
\bib{hw}{article}{ 
		author={Haglund,Frederic}, 
		author = {Wise, Dani}, 
		title={Special cube complexes}, 
		journal={Geometric And Functional Analysis}, 
		volume={17}, 
		date={2008}, 
		number = {5},
		pages={1551--1620}, 
	
		}

\bib{hw1}{article}{ 
		author={Haglund,Frederic}, 
		author = {Wise, Dani}, 
		title={Coxeter groups are virtually special}, 
		journal={Advances in Mathematics}, 
		volume={224}, 
		number = {5}, 
		date={2010}, 
		pages={1890--1903},

		 }

\bib{j88}{article}{
   author={Jackowski, Stefan},
	title={A fixed-point theorem for $p$-group actions},
        date={1988},
   	  journal={Proceedings of the American Mathematical Society},
      volume={102},
      number={1},
       pages={205--208},

}

\bib{l93}{article}{
      author={Linnell, Peter},
	title={Division rings and group von Neumann algebras},
        date={1993},
       journal={Forum Mathematicum},
      volume={5},
      number={6},
       pages={561--576},
      review={\MR{1242889 (94h:20009)}},

}

\bib{los12}{article}{
      author={Linnell, Peter},
      author={Okun, Boris},
      author={Schick, Thomas},
       title={The strong {A}tiyah conjecture for right-angled {A}rtin and
  {C}oxeter groups},
        date={2012},
     journal={Geometriae Dedicata},
      volume={158},
      number={1},
       pages={261--266},
}

\bib{ls07}{article}{
      author={Linnell, Peter},
      author={Schick, Thomas},
       title={Finite group extensions and the {A}tiyah conjecture},
        date={2007},
        ISSN={0894-0347},
     journal={J. Amer. Math. Soc.},
      volume={20},
      number={4},
       pages={1003--1051},
      review={\MR{2328714 (2008m:58041)}},
}

\bib{MR2561762}{article}{ 
author = {Lorensen,K.},
 title = {Groups with the same cohomology as their profinite completions}, 
 journal = {J. Algebra}, 
		volume = {320}, 
		date = {2008}, 
		number = {1}, 
		pages = {1704--1722},
		}
	
\bib{nr02}{article}{
      author={Niblo, Graham},
      author={Reeves, Laurence},
	journal = {J.  Group Theory},
       title={Coxeter groups act on CAT(0) cube complexes},
        date={2002},
    volume = {6},
pages = {399--413},
}

\bib{psz10}{article}{
      author={Pichot, Mika{\"e}l},
      author={Schick, Thomas},
      author={Zuk, Andrzej},
       title={Closed manifolds with transcendental ${L}^2$-Betti numbers},
	date = {2010},
      
         note={arXiv.org:1005.1147},
}

\bib{s00}{article}{  
author = {Schick, Thomas},
title = {Integrality of $L^2$-Betti numbers},
journal = {Mathematische Annalen},
volume = {317},
pages = {727--750},
date = {2000},
review = {MR1777117 (2002k:55009a)},
}

\bib{sch07}{article}{
author = {Schick, Thomas},
title = {Finite group extensions and the Baum--Connes conjecture},
journal = {Geom. Topology},
volume = {11},
date = {2007},
pages = {1767--1775},
}

\bib{s97} {article}{
    AUTHOR = {Serre, Jean-Pierre},
     TITLE = {Galois Cohomology},
    SERIES = {Springer Monographs in Mathematics},
   EDITION = {English},
      NOTE = {Translated from the French by Patrick Ion and revised by the
              author},
               book={
      publisher={Springer-Verlag},
      place={Berlin},
   },

review = {MR1466966 (98g:12007)},
      YEAR = {2002},
     PAGES = {x+210},
      
  }

\bib{st65} {article}{
author = {Stallings, John},
title = {Homology and central series of groups},
journal = {Journal of Algebra},
pages = {170--181},
volume = {2},
year = {1965},

}

\bib{wz09} {article}{
author = {Wilton, Henry},
author = {Zalesskii, Pavel},
journal = {Geom. Dedicata},
title = {Profinite properties of graph manifolds},
year = {2010},
volume = {147},
number = {1},
pages = {29--45},

}

	\end{biblist}
\end{bibdiv}

\end{document}